\newcommand{\st}{\stackrel}
\newcommand{\T}{\mathrm}
\newtheorem{theorem}{Theorem}[section]
\newtheorem{corollary}[theorem]{Corollary}
\newtheorem{lemma}[theorem]{Lemma}
\newtheorem{proposition}[theorem]{Proposition}
\newtheorem{definition}[theorem]{Definition}
\newtheorem{definitions}[theorem]{Definitions}
\newtheorem{examples}[theorem]{Examples}
\newtheorem{example}[theorem]{Example}
\newtheorem{notation}[theorem]{Notation}
\newtheorem{remark}[theorem]{Remark}
\newtheorem{remarks}[theorem]{Remarks}
\numberwithin{equation}{section}
\begin{document}
\bibliographystyle{amsplain}

\title[algebraic invariants of edge ideals of hypergraphs]{Some interpretations for algebraic invariants of edge ideals of hypergraphs via combinatorial invariants}
\author[S. Moradi and F. Khosh-Ahang]{Somayeh Moradi and Fahimeh Khosh-Ahang$^*$ }

\address{Somayeh Moradi, Department of Mathematics, School of Science, Ilam University, P.O.Box 69315-516, Ilam, Iran.} 
\email{so.moradi@ilam.ac.ir}
\address{Fahimeh Khosh-Ahang, Department of Mathematics, School of Science, Ilam University, P.O.Box 69315-516, Ilam, Iran.}
\email{fahime$_{-}$khosh@yahoo.com, f.khoshahang@ilam.ac.ir}

\keywords{graded Betti numbers, projective dimension, regularity, triangulated hypergraph.  \\
$*$Corresponding author}
\subjclass[2010]{Primary 13F55, 13D02, 05C90;  Secondary 05E99}

\begin{abstract}
The present work is concerned with characterizing some algebraic invariants of edge ideals of hypergraphs. To this aim, firstly, we introduce some kinds of combinatorial invariants similar to matching numbers for hypergraphs. Then we compare them to each other and to previously existing ones. These invariants are used for characterizing or bounding some algebraic invariants of edge ideals of hypergraphs such as  graded Betti numbers, projective dimension and Castelnouvo-Mumford regularity.
\end{abstract}

\maketitle

\section*{Introduction}

Throughout this paper, suppose that $\mathcal{H}$ is a \textbf{simple hypergraph} on the vertex set $V(\mathcal{H})=\{x_1, \dots , x_n\}$ with the edge set $\mathcal{E}(\mathcal{H})$. That is for each $S\in \mathcal{E}(\mathcal{H})$, $|S|\geq 2$ and for every distinct edges $S$ and $S'$ of $\mathcal{H}$, $S\nsubseteq S'$.   Also, for each $W\subseteq V(\mathcal{H})$, the \textbf{induced subhypergraph} $\mathcal{H}_W$ is the hypergraph on the vertex set $W$ whose edges are the edges of $\mathcal{H}$ which are contained in $W$. Moreover, if $S\in \mathcal{E}(\mathcal{H})$, then we use the notation $\mathcal{H}\setminus S$ for the hypergraph with the vertex set $V(\mathcal{H})$ and the edge set $\mathcal{E}(\mathcal{H})\setminus \{S\}$.
We identify the vertex $x_i$ of $\mathcal{H}$ with the variable $x_i$ of the polynomial ring over the field $K$, which we denote by $R: R=K[x_1, \dots , x_n]$. Consider $R$ as $\mathbb{N}$-graded ring by defining $\mathrm{deg}x_i=1$. For each subset $S$ of $V(\mathcal{H})$,  denote the monomial $\prod_{x_i\in S}x_i$ in $R$ by $x^S$. For our convenience, we sometimes denote the subset $\{x_{i_1}, \dots , x_{i_k}\}$ of $\{x_1, \dots , x_n\}$ by the monomial $x_{i_1}\cdots x_{i_k}$. The squarefree monomial ideal
$$I(\mathcal{H})=\langle x^S \ | \ S\in \mathcal{E}(\mathcal{H}) \rangle$$ of $R$ is called the \textbf{edge ideal} of $\mathcal{H}$.
We say that a simple hypergraph $\mathcal{H}$ is \textbf{$d$-uniform} if all edges of $\mathcal{H}$ have the same cardinality $d$. When this is the case, $I(\mathcal{H})$ is a squarefree monomial ideal generated in degree $d$.

For a minimal graded free resolution of $R/I(\mathcal{H})$
$$\cdots \longrightarrow \oplus_jR(-j)^{\beta_{i,j}} \longrightarrow \cdots \longrightarrow \oplus_jR(-j)^{\beta_{1,j}} \longrightarrow R \longrightarrow R/I(\mathcal{H}) \longrightarrow 0,$$
$\beta_{i,j}(R/I(\mathcal{H}))$ is called the $(i,j)$th \textbf{graded Betti number} of $R/I(\mathcal{H})$. Also, recall that the \textbf{Castelnuovo-Mumford regularity} (or simply \textbf{regularity})
of  $R/I(\mathcal{H})$ is defined as
$$\T{reg}(R/I(\mathcal{H})) = \max\{j-i \ | \ \beta_{i,j}(R/I(\mathcal{H}))\neq 0\},$$
and the \textbf{projective dimension} of $R/I(\mathcal{H})$ is defined as
$$\T{pd}(R/I(\mathcal{H})) = \max\{i \ | \ \beta_{i,j}(R/I(\mathcal{H}))\neq 0 \ \text{for some}\ j\}.$$

It is well known that there is a bijection between the set of all squarefree monomial ideals of $R$ and hypergraphs with the vertex set $\{x_1, \dots , x_n\}$ via the edge ideals after the worthy work of Villarreal \cite{Villarreal}. Hence translating algebraic properties of the edge ideal of a hypergraph $\mathcal{H}$ to combinatorial properties of $\mathcal{H}$ has attracted considerable  attention for more than two decades. In general, it is hard to determine the graded Betti numbers, projective dimension and regularity of a monomial ideal or even bounding them. Therefore, it is worth to find some classes of hypergraphs  (resp. graphs) for which these invariants of their edge ideals can be determined or bounded by some combinatorial aspects of the underlying hypergraphs (resp. graphs). In this regard, there are many papers devoted to studying this problem (cf. \cite{Ha+Vantuyl}, \cite{Katzman}, \cite{FS}, \cite{Kimura}, \cite{F+S}, \cite{M+V}, \cite{Zheng} and etc.). Nevertheless most of works are about edge ideals of graphs and so generalizing the gained results on graphs to hypergraphs, to cover all squarefree monomial ideals, makes sense.

The main goal of this paper is to find some relations between the graded Betti numbers of the edge ideal of a hypergraph and some combinatorial invariants associated to the hypergraph. In particular, we seek for some combinatorial descriptions for the projective dimension and regularity of edge ideals.  In this way, the motivation was to generalize some combinatorial characterizations or bounds of regularity and projective dimension of the edge ideals which were presented in \cite{Ha+Vantuyl}, \cite{Katzman}, \cite{Kimura} and \cite{M+V}. To this end, in the first section, we introduce some new combinatorial invariants for hypergraphs and compare them with each other and the  previously existing ones. Then in Section 2, after recalling some preliminaries, we find some bounds for $\beta_{i,j}(R/I(\mathcal{H}))$ in Theorem \ref{3.7} under certain circumstances. This result helps us  to generalize Katzman's argument and to cover \cite[Theorem 6.5]{Ha+Vantuyl}, \cite[Lemma 2.2 and Proposition 2.5]{Katzman} and
\cite[Corollary 3.9]{M+V} in Corollary \ref{C3.7} and Theorem \ref{T3.7}. Also in Theorem \ref{1.9}, we introduce some combinatorial lower bounds for projective dimension and regularity of edge ideal of a simple hypergraph  which covers Theorem 3.1 in \cite{Kimura}. In the third section, we present a precise interpretation for projective dimension and regularity of edge ideal of special class of hypergraphs by some combinatorial invariants  which is a generalization of  some of the main results of \cite{Kimura} and \cite{Zheng} (see Theorem \ref{1.11}).

\section{Some hypergraph invariants}
There are some hypergraph invariants which lead us to characterizing graded Betti numbers and so  finding some bounds for projective dimension and regularity of $R/I(\mathcal{H})$ in the next sections. In this regard, for convenience of the reader, we have gathered together a complete list of these invariants consisting of some previously existing invariants and some new ones in the following definitions.
\begin{definitions} \label{1.2}
Let $\mathcal{S}=\{S_1, \dots , S_i\}$ be a family of edges of $\mathcal{H}$.
\begin{itemize}
\item[1.] (See \cite{Berge}.) $\mathcal{S}$ is called a \textbf{matching} in $\mathcal{H}$ if for each $1\leq \ell < \ell' \leq i$,  $S_\ell\cap S_{\ell '}=\emptyset$.
\item[2.] (See \cite[Definition 2.1]{F+S}.)  $\mathcal{S}$ is called a \textbf{semi-induced matching} in $\mathcal{H}$ if for each $S\in \mathcal{E}(\mathcal{H})\setminus \{S_1, \dots , S_i\}$, $S\nsubseteq \bigcup_{\ell=1}^i S_\ell$ or equivalently the induced subhypergraph on $\bigcup_{\ell=1}^i S_\ell$ has only the edges $S_1, \dots, S_i$.
\item[3.]  $\mathcal{S}$ is called a \textbf{self semi-induced matching} in $\mathcal{H}$ if
\begin{itemize}
\item[i.] $\mathcal{S}$ is a semi-induced matching in $\mathcal{H}$;
\item[ii.] for all $1\leq k \leq i$, $S_k\nsubseteq \bigcup _{1\leq \ell \leq i,\ell\neq k} S_\ell$.
\end{itemize}
\item[4.] $\mathcal{S}$ is called a \textbf{self-contained semi-induced matching} in $\mathcal{H}$ if
\begin{itemize}
\item[i.] for each $S\in \mathcal{E}(\mathcal{H})\setminus \{S_1, \dots , S_i\}$, $S\nsubseteq \bigcup_{\ell=1}^i S_\ell$ or there exists $1\leq k \leq i$ such that $S_k\subseteq S\cup (\bigcup _{1\leq \ell\leq i,\ell\neq k} S_\ell)$;
\item[ii.] for all $1\leq k \leq i$, $S_k\nsubseteq \bigcup _{1\leq \ell\leq i,\ell\neq k} S_\ell$.
\end{itemize}
\item[5.] (See \cite{M+V}.) $\mathcal{S}$ is called an \textbf{induced matching} in $\mathcal{H}$ if
\begin{itemize}
\item[i.] $\mathcal{S}$ is a semi-induced matching in $\mathcal{H}$;
\item[ii.] $\mathcal{S}$ is a matching in $\mathcal{H}$.
\end{itemize}
\item[6.] $\mathcal{S}$ is called a \textbf{ self disjoint set (self semi-disjoint set)}  in $\mathcal{H}$ if
\begin{itemize}
\item[i.] for all $1\leq k \leq i$, $S_k\nsubseteq \bigcup _{1\leq \ell \leq i,\ell\neq k} S_\ell$;
\item[ii.] there is an  induced matching (a semi-induced matching) $\mathcal{S}_0=\{S_{k_1}, \dots , S_{k_t}\}$ contained in  $\mathcal{S}$ such that for each $\ell\in\{1, \dots , i\}\setminus \{k_1, \dots , k_t\}$, there exists $1\leq \ell ' \leq t$ with $|S_\ell\setminus S_{k_{\ell '}}|=1$.
\end{itemize}
\item[7.] $\mathcal{S}$ is called a \textbf{ self ordered set of edges}  in $\mathcal{H}$ if either $i=1$ or
\begin{itemize}
\item[i.] for all $1\leq k \leq i$, $S_k\nsubseteq \bigcup _{1\leq \ell \leq i,\ell\neq k} S_\ell$;
\item[ii.] for each $S\in \mathcal{E}(\mathcal{H})\setminus \{S_1, \dots , S_i\}$ there exists $k<i$ such that $S_k\subseteq S\cup (\bigcup_{\ell=k+1}^iS_\ell)$. (Note that in this definition the order of edges in $\mathcal{S}$ is important.)
\end{itemize}
\end{itemize}
Assume that $\mathcal{S}=\{S_1, \dots , S_i\}$ is one of the above ones and set $j=|\bigcup_{\ell =1}^i S_\ell |$. Then we define the type of $\mathcal{S}$ as $(i,j)$.
\end{definitions}
Now, we recall the following definition from \cite{Kimura} and \cite{Zheng}.
\begin{definition}\label{1.2.1}
(See \cite{Kimura} and \cite{Zheng}.) A graph $B$ with the vertex set $\{x, y_1,\dots, y_t\}$, $t\geq 1$, and the edges $\{x, y_\ell\}$
for $\ell = 1, \dots , t$ is called a \textbf{bouquet}. The
vertex $x$ is called the \textbf{root}, the vertices $y_\ell$ the \textbf{flowers} and the edges $\{x, y_\ell\}$ the \textbf{stems}
of this bouquet. A subgraph $B$ of $G$ which is a bouquet is called a bouquet of $G$.

A set $\mathcal{B}=\{B_1, \dots , B_j\}$ of bouquets of $G$ is called \textbf{strongly disjoint set of bouquets} if
\begin{itemize}
\item[i.] for all $k\neq \ell$, $V(B_k)\cap V(B_\ell)=\emptyset$;
\item[ii.] we can choose a stem $S_k$ from each bouquet $B_k$ so that $\{S_1, \dots , S_j\}$ is an induced matching in $G$.
\end{itemize}
When this is the case, if the number of flowers of $B_k$s is $i$, then we say that $\mathcal{B}$ is of type $(i,j)$.
\end{definition}

Throughout this paper, we also need the following notation.
\begin{notation}
For a hypergraph $\mathcal{H}$ we use the following notation.
\begin{align*}
&m_{\mathcal{H}}=\max \{i \ | \  \textrm{there is a } \textrm{matching of size  } i \textrm{ in } \mathcal{H} \},\\
&a_{\mathcal{H}}=\max \{i \ | \  \textrm{there is an induced matching of size  } i \textrm{ in } \mathcal{H} \},\\
&a_{\mathcal{H},t}=\max \{i \ | \  \textrm{there is an induced matching of size  } i \textrm{ in } \mathcal{H} \textrm{ consisting of } t\textrm{-sets}\},\\
&b_{\mathcal{H}}=\max \{i \ | \  \textrm{there is a self semi-induced matching of size  } i \textrm{ in } \mathcal{H}  \},\\
&b'_{\mathcal{H}}=\max \{j-i \ | \  \textrm{there is a self semi-induced matching of type  }(i,j) \textrm{ in } \mathcal{H}  \},\\
&c_{\mathcal{H}}=\max \{i \ | \ \textrm{there is a self ordered set of edges of size  } i \textrm{ in } \mathcal{H}  \},\\
&c'_{\mathcal{H}}=\max \{j-i \ | \ \textrm{there is a self ordered set of edges in }  \mathcal{H} \textrm{ of type } (i,j) \},\\
&d_{1,\mathcal{H}}=\max \{i \ | \ \textrm{there is a  self disjoint set in }  \mathcal{H} \textrm{ of size } i \},\\
&d_{2,\mathcal{H}}=\max \{i \ | \ \textrm{there is a self semi-disjoint set in }  \mathcal{H} \textrm{ of size } i \},\\
&d'_{1,\mathcal{H}}=\max \{j-i \ | \ \textrm{there is a self disjoint set in }  \mathcal{H} \textrm{ of type } (i,j) \},\\
&d'_{2,\mathcal{H}}=\max \{j-i \ | \ \textrm{there is a self semi-disjoint set in }  \mathcal{H} \textrm{ of type } (i,j) \},\\
&d_G=\max\{i \ | \  \textrm{there is a strongly disjoint set of bouquets in }  G \textrm{ of type } (i,j) \},\\
&d'_G=\max\{j \ | \  \textrm{there is a strongly disjoint set of bouquets in }  G \textrm{ of type } (i,j) \},\\
&e_{\mathcal{H}}=\max \{i \ | \ \textrm{there is a self-contained semi-induced matching of size  }i \textrm{ in } \mathcal{H}  \}.
\end{align*}
\end{notation}

In previous notation,  the invariants $m_{\mathcal{H}}$ and $a_{\mathcal{H}}$  are known as the matching number and the induced matching number in hypergraphs and $d_G$ is as defined in \cite{Kimura}. Also it can be easily seen that if $\mathcal{H}$ is a $d$-uniform hypergraph, then $a_{\mathcal{H},d}=a_{\mathcal{H}}$. Moreover, H$\T{\grave{a}}$ and Van Tuyl in \cite{Ha+Vantuyl} introduced the concepts of properly-connected hypergraphs and pairwise $t$-disjoint sets of edges in a $d$-uniform properly-connected hypergraph. As one can see in the proof of Theorem 6.5 in \cite{Ha+Vantuyl}, the authors have shown that in a $d$-uniform properly-connected hypergraph, a set of edges in $\mathcal{H}$ is an induced matching if and only if it is a pairwise $(d+1)$-disjoint set.

In the following remarks, we compare the defined concepts in Definitions \ref{1.2} with each other.

\begin{remarks}\label{remarks1.3}
\begin{itemize}
\item[1.] In the light of Definitions \ref{1.2}, we have the following implications.
$$\begin{array}{lllll}
  &   & \textrm{matching} &   & \textrm{semi-induced matching} \\
  & \nearrow &   & \nearrow &   \\
\textrm{induced matching} & \longrightarrow & \textrm{self semi-induced matching} & \longrightarrow & \textrm{self-contained semi-induced matching} \\
  & \searrow &   & \searrow &   \\
  &   & \textrm{self disjoint set} & \longrightarrow & \textrm{self semi-disjoint set}\\
\end{array} $$
$$\begin{array}{lll}
\textrm{self-ordered set of edges} & \longrightarrow & \textrm{self-contained semi-induced matching}
\end{array} $$

\item[2.] In view of Part 1, we have the following inequalities.
$$a_{\mathcal{H}}\leq m_{\mathcal{H}},$$
$$a_{\mathcal{H}}\leq b_{\mathcal{H}}\leq \min\{d_{2,\mathcal{H}}, e_{\mathcal{H}}\}, $$
$$a_{\mathcal{H}} \leq  d_{1,\mathcal{H}}\leq  d_{2,\mathcal{H}},$$
$$c_{\mathcal{H}}\leq e_{\mathcal{H}},$$
$$b'_{\mathcal{H}}\leq d'_{2,\mathcal{H}},$$
$$d'_{1,\mathcal{H}}\leq d'_{2,\mathcal{H}}.$$
\end{itemize}
\end{remarks}

The following proposition illustrates that when $G$ is a graph, the invariants $d_{1, G}$, $d_{2,G}$ and $d_G$ coincide. So, as the reader will see in Theorems \ref{1.9} and \ref{1.11}, our invariants $d_{1, \mathcal{H}}$ and $d_{2,\mathcal{H}}$ may be some efficient generalizations of $d_G$, which is defined in \cite{Kimura}, for hypergraphs.
\begin{proposition}\label{proposition1.3}
\begin{itemize}
\item[1.] If $G$ is a graph and  $\mathcal{B}$ is a strongly disjoint  set of bouquets of $G$ of type $(i,j)$, then $\mathcal{E}(\mathcal{B})$ is a self disjoint set in $G$ of type $(i, i+j)$. Conversely, if $\mathcal{S}$ is a self disjoint set in $G$ of type $(i,i+j)$, then it can be seen that $\mathcal{S}=\mathcal{E}(\mathcal{B})$ for some $\mathcal{B}$ which is a strongly disjoint set of bouquets of $G$ of type $(i,j)$.
\item[2.] Every self semi-disjoint set in a graph $G$ is a self disjoint set.
\item[3.] For any graph $G$, we have $d_{1, G}=d_{2,G}=d_G$ and $d'_{1, G}=d'_{2,G}=d'_G$.
\end{itemize}
\end{proposition}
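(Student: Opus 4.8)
The plan is to unwind the definitions carefully and establish the three parts in order, using Part~1 and Part~2 to deduce Part~3. First I would handle Part~1, which sets up the correspondence between strongly disjoint sets of bouquets and self disjoint sets in a graph. Given a strongly disjoint set $\mathcal{B}=\{B_1,\dots,B_j\}$ of type $(i,j)$, the total number of flowers across all bouquets is $i$, so $\mathcal{E}(\mathcal{B})$ consists of exactly $i$ stems (edges). I would verify the two defining conditions of a self disjoint set for $\mathcal{S}=\mathcal{E}(\mathcal{B})$. Since each edge of a bouquet is a stem $\{x,y_\ell\}$ and the flower $y_\ell$ appears in no other edge (the bouquets are vertex-disjoint and within a bouquet each flower is used only once), no stem lies in the union of the others, giving condition~(i). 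For condition~(ii), the chosen stems $S_1,\dots,S_j$ form an induced matching $\mathcal{S}_0$; for any non-chosen stem $S_\ell=\{x_k,y\}$ belonging to bouquet $B_k$, its root $x_k$ equals a vertex of the chosen stem $S_k$ of that bouquet, so $|S_\ell\setminus S_k|=1$, matching the requirement. The vertex count is $j$ roots plus $i$ flowers, giving type $(i,i+j)$. For the converse, from a self disjoint set $\mathcal{S}$ of type $(i,i+j)$ with its induced submatching $\mathcal{S}_0=\{S_{k_1},\dots,S_{k_j}\}$, I would group the remaining edges according to which $S_{k_r}$ they share a vertex with, declare that shared vertex the root, and thereby reconstruct bouquets; the condition $|S_\ell\setminus S_{k_r}|=1$ in a graph forces exactly one common vertex, making this grouping well-defined and the resulting bouquets vertex-disjoint.

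Next I would prove Part~2, that in a graph every self semi-disjoint set is already a self disjoint set. The only difference between the two notions is that a self semi-disjoint set requires merely a semi-induced matching $\mathcal{S}_0$, whereas a self disjoint set requires an induced matching. So it suffices to show that for a graph, the submatching $\mathcal{S}_0$ arising in a self semi-disjoint set is automatically a matching (hence induced). The key point is the interaction between conditions (i) and (ii): the edges $S_{k_r}$ are edges of a graph, so $|S_{k_r}|=2$; if two of them met in a vertex they would share it, and I would argue that the structure forced by the $|S_\ell\setminus S_{k_r}|=1$ condition together with the no-containment condition~(i) rules out such overlaps. I expect the main obstacle to lie precisely here, in ruling out that two edges of the semi-induced submatching share a vertex: I would need to check that a shared vertex among the $S_{k_r}$ would contradict either the semi-induced property or condition~(i), possibly by tracking how the extra edges attach. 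This is the one place where the graph hypothesis ($|S|=2$ for all edges) is genuinely used, and where the hypergraph analogue fails.

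Finally, Part~3 follows formally. The equality $d_{1,G}=d_G$ and $d'_{1,G}=d'_G$ are immediate from the type-preserving bijection of Part~1: a self disjoint set of type $(i,i+j)$ corresponds to a strongly disjoint set of bouquets of type $(i,j)$, so maximizing the size $i$ gives $d_{1,G}=d_G$, and maximizing $j$ on the bouquet side (equivalently $(i+j)-i=j$ on the self disjoint side) gives $d'_{1,G}=d'_G$. The equalities $d_{1,G}=d_{2,G}$ and $d'_{1,G}=d'_{2,G}$ follow from Part~2 together with the already-recorded inequalities $d_{1,\mathcal{H}}\leq d_{2,\mathcal{H}}$ and $d'_{1,\mathcal{H}}\leq d'_{2,\mathcal{H}}$ from Remarks~\ref{remarks1.3}: Part~2 shows every self semi-disjoint set in $G$ is a self disjoint set, so the reverse inequalities $d_{2,G}\leq d_{1,G}$ and $d'_{2,G}\leq d'_{1,G}$ hold, forcing equality throughout.
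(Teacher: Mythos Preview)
Your plan for Parts~1 and~3 is essentially the same as the paper's and is fine. The real issue is your strategy for Part~2.

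You propose to show that the semi-induced matching $\mathcal{S}_0$ sitting inside a self semi-disjoint set $\mathcal{S}$ is \emph{automatically} a matching when $G$ is a graph. This is false. Take $G$ to be the path on $\{a,b,c\}$ with edges $ab$ and $bc$, and let $\mathcal{S}=\mathcal{S}_0=\{ab,bc\}$. Condition~(i) holds (each edge has a private endpoint), and $\mathcal{S}_0$ is a semi-induced matching (there are no other edges), so $\mathcal{S}$ is a self semi-disjoint set. But $\mathcal{S}_0$ is not a matching, since $ab$ and $bc$ share $b$. So you cannot hope to prove that the given $\mathcal{S}_0$ is already an induced matching; the argument you sketch (``a shared vertex would contradict either the semi-induced property or condition~(i)'') simply does not go through.

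The paper's fix is not to salvage $\mathcal{S}_0$ but to \emph{replace} it. Condition~(i) alone forces $\mathcal{S}$ to be the edge set of a vertex-disjoint family of bouquets (each edge has a private vertex, its flower; the other vertex is its root; and a path of length two among the edges would violate condition~(i)). Condition~(ii) then guarantees that $\mathcal{S}_0$ contains at least one stem from each bouquet. Now \emph{choose} exactly one stem of $\mathcal{S}_0$ from each bouquet to form a new $\mathcal{S}_0'$. This $\mathcal{S}_0'$ is a matching because the bouquets are vertex-disjoint, it is semi-induced (any edge outside $\mathcal{S}_0'$ either lies outside $\mathcal{S}_0$, hence outside $\bigcup\mathcal{S}_0\supseteq\bigcup\mathcal{S}_0'$, or has a private flower not in $\bigcup\mathcal{S}_0'$), and every $S_\ell\in\mathcal{S}\setminus\mathcal{S}_0'$ shares its root with the chosen stem of its bouquet, giving $|S_\ell\setminus S_k'|=1$. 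Thus $\mathcal{S}$ is a self disjoint set via $\mathcal{S}_0'$, not via the original $\mathcal{S}_0$. Once you make this replacement, your Part~3 derivation goes through unchanged.
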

\begin{proof}
\begin{itemize}
\item[1.] The first statement is straightforward consequence of Definitions \ref{1.2} and \ref{1.2.1}. Conversely if $\mathcal{S}$ is a self disjoint set in $G$ of type $(i,i+j)$, then Condition (i) in Definitions \ref{1.2}(6) ensures that $\mathcal{S}=\mathcal{E}(\mathcal{B})$ for some $\mathcal{B}$ which is a set of bouquets in $G$.  Also, Condition (ii) in Definitions \ref{1.2}(6) implies that $\mathcal{S}_0$ has at least one stem from each bouquet of $\mathcal{B}$. Since $\mathcal{S}_0$ is an induced matching, $\mathcal{S}_0$ has exactly one stem from each bouquet of $\mathcal{B}$. Therefore, $\mathcal{B}$ is a strongly disjoint set of bouquets of $G$ of type $(i,j)$.
\item[2.] Note that in a graph $G$, if $\mathcal{S}$ is a self semi-disjoint set, then Condition (i) in Definitions \ref{1.2}(6) ensures that $\mathcal{S}=\mathcal{E}(\mathcal{B})$ for some $\mathcal{B}$ which is a set of bouquets in $G$. Also, Condition (ii) in Definitions \ref{1.2}(6) implies that $\mathcal{S}_0$ has at least one stem from each bouquet of $\mathcal{B}$. Hence if we choose one stem from each bouquet of $\mathcal{B}$ which lies in $\mathcal{S}_0$, then it makes $\mathcal{S}$ into a self disjoint set.
\item[3.] As we know, the notion of self semi-disjoint set and self disjoint set are equivalent for the graph by Part 2 and Remarks \ref{remarks1.3}(1). Hence by Part 1 the equalities can be gained.
\end{itemize}
\end{proof}

The following examples show that even in graphs, the inverse implications in Remarks \ref{remarks1.3}(1) do not necessarily hold and the inequalities in Remarks \ref{remarks1.3}(2) can be strict.
\begin{examples}
\begin{itemize}
\item[1.] Assume that $G$ is a cycle on the vertex set $\{x,y,z\}$. Then it can be easily seen that $\{xy, xz\}$ is a maximal self-contained semi-induced matching which is not a self semi-induced matching. We have $b_G=1< 2= e_G$.

\item[2.] Suppose that $G$ is a path with $V(G)=\{x,y,z\}$ and $E(G)=\{xy, yz\}$. Then it can be seen that $\{xy, yz\}$ is a maximal self semi-induced matching which is not an induced matching. We have $a_G=1 <2=b_G$.

\item[3.] Assume that $G$ is a path with $V(G)=\{w, x, y, z\}$ and $E(G)=\{wx,xy, yz\}$. Then one can easily check that $\{wx, yz\}$ is a matching which is not an induced matching and we have $a_G=1<2=m_G$.

\item[4.] Let $G$ be a path with $V(G)=\{u, v, w, x, y, z\}$ and $E(G)=\{uv, vw, wx, xy, yz\}$.
Then one can check that $\{uv, vw, xy, yz\}$ is a self disjoint set of edges in $G$ which is not self semi-induced matching. We have $b_G=3<4=d_G$.

\item[5.] Let $G$ be the cycle with $V(G)=\{w, x, y, z\}$ and $E(G)=\{wx, xy, yz, zw\}$. Then one can easily check that $\{wx, xy\}$ is a maximal self-contained semi-induced matching in $G$ which is not a self-ordered set of edges. Hence $e_G=2>1=c_G$.

\item[6.] Assume that $\mathcal{H}$ is the hypergraph with $V(\mathcal{H})=\{x_1, \dots, x_6\}$ and
$$\mathcal{E}(\mathcal{H})=\{x_1x_2x_3, x_2x_3x_4,x_2x_5x_6\}.$$
 Then it can be easily check that $\mathcal{E}(\mathcal{H})$ is a self semi-disjoint set which is not a self disjoint set and so $d_{1,\mathcal{H}}<d_{2,\mathcal{H}}$.
\end{itemize}
\end{examples}

\section{Graded Betti numbers and hypergraph invariants}
We begin this section by the following remarks which all of its parts are trivial facts or straightforward consequences of Hochster's formula (\cite[Theorem 5.1]{Hoch}.

\begin{remarks}(Compare \cite[Corollary 1.2]{Katzman}, \cite[Lemma 3.3]{Kimura} and \cite[Proposition 3.8]{M+V}.)\label{1.3}
For every hypergraph $\mathcal{H}$ with $n$ vertices and every integers $i$ and $j$, the following statements hold.
\begin{itemize}
\item[1.] $\beta_{i,j}(R/I(\mathcal{H}))=\sum_{W\subseteq V(\mathcal{H}), |W|=j}\beta_{i,j}(R/I(\mathcal{H}_W))$.
\item[2.] Part 1 shows that if  $\mathcal{H}'$ is an induced subhypergraph of $\mathcal{H}$, then
$$\beta_{i,j}(R/I(\mathcal{H}'))\leq \beta_{i,j}(R/I(\mathcal{H})),$$
and so
$$\mathrm{pd}(R/I(\mathcal{H}'))\leq \mathrm{pd}(R/I(\mathcal{H})),$$
and
$$\mathrm{reg}(R/I(\mathcal{H}'))\leq \mathrm{reg}(R/I(\mathcal{H})).$$
\item[3.] Set $t=\max \{|S| \ | \ S\in \mathcal{E}(\mathcal{H})\}$ and $t'=\min \{|S| \ | \ S\in \mathcal{E}(\mathcal{H})\}$. Then $\beta_{i,j}(R/I(\mathcal{H}))\neq 0$ implies $i+t'-1\leq j \leq \min\{n,ti\}$ by the fact that $\beta_{i,j}(R/I(\mathcal{H}))=0$ for all $i$ and $j$ with $j<i+t'-1$, and Part 1.
\item[4.] Part 3 ensures that $t'-1\leq \mathrm{reg}(R/I(\mathcal{H})) \leq (t-1)\mathrm{pd}(R/I(\mathcal{H}))$.
\end{itemize}
\end{remarks}

Now we are going to explain the Taylor resolution and a Lyubeznik resolution of $R/I(\mathcal{H})$ and to use their notation hereafter. Let $\mathcal{E}(\mathcal{H})=\{S_1, \dots , S_m\}$ and $I(\mathcal{H})=\langle x^{S_1}, \dots , x^{S_m}\rangle$. Let $T_0=R$ and $T_i$ be the free $R$-module whose free generators are $e_{\ell_1, \dots , \ell_i}$, where $1\leq \ell_1< \dots < \ell_i\leq m$. For each $i\geq 1$ and $e_{\ell_1, \dots , \ell_i}\in T_i$ define
$\partial _i:T_i \rightarrow T_{i-1}$ with
$$\partial_i(e_{\ell_1, \dots , \ell_i})=\sum_{k=1}^i (-1)^k\mu_k e_{\ell_1, \dots, \widehat{\ell_k}, \dots,  \ell_i},$$
where $\mu_k=x^{S_{\ell_k}\setminus (\cup _{1\leq t \leq i,t\neq k} S_{\ell_t})}$. Then
$$T_\bullet : \dots \rightarrow T_{i+1} \st{\partial_{i+1}}{\longrightarrow} T_i \st{\partial_i}{\longrightarrow} T_{i-1}\rightarrow  \dots \rightarrow T_0 \rightarrow R/I(\mathcal{H}) \rightarrow 0$$
is a free resolution of $R/I(\mathcal{H})$ which is called the \textbf{Taylor resolution} of $R/I(\mathcal{H})$. Considering the degree of
$e_{\ell_1, \dots,  \ell_i}$ as
$$\mathrm{deg}(e_{\ell_1, \dots,  \ell_i})=\mathrm{deg} (\mathrm{lcm}(x^{S_{\ell_1}}, \dots , x^{S_{\ell_i}}))=|\bigcup_{t=1}^iS_{\ell_t}|,$$
 we have that $T_\bullet$ is a graded free resolution of $R/I(\mathcal{H})$ which is not necessarily minimal. But we may use it for computing the graded Betti numbers $\beta _{i,j}(R/I(\mathcal{H}))$ as follows.
\begin{equation}\label{equation3}
\begin{aligned}
\beta _{i,j}(R/I(\mathcal{H}))&=\mathrm{dim}_K(\mathrm{Tor}_{i}^R(R/I(\mathcal{H}),K))_j\\
&=\mathrm{dim}_K(H_{i}(T_\bullet \otimes _R  R/\langle x_1, \dots , x_n\rangle ))_j\\
&=\mathrm{dim}_K(\mathrm{Ker}\overline{\partial}_{i}/\mathrm{Im}\overline{\partial}_{i+1})_j.
\end{aligned}
\end{equation}
One can check that after tensoring $T_\bullet$ with $R/\langle x_1, \dots , x_n\rangle$, we have
\begin{equation}\label{equation4}
\overline{\partial}_i (\overline{e_{\ell_1, \dots ,  \ell_i}})=\sum_{S_{\ell_k}\subseteq \cup _{1\leq t \leq i,t\neq k} S_{\ell_t}} (-1)^k \overline{e_{\ell_1, \dots, \widehat{\ell_k}, \dots,  \ell_i}},
\end{equation}
where for each $0\leq i \leq m$ and each member $u\in T_i$, $\overline{u}$ is the natural image of $u$ in $\overline{T_i}=T_i\otimes_R R/\langle x_1, \dots , x_n\rangle$ and $\overline{\partial}_i =\partial_i\otimes_R \mathrm{id}_{R/\langle x_1, \dots , x_n\rangle }$.

Now, consider an ordering on edges of $\mathcal{H}$. The free generator  $e_{\ell_1, \dots ,  \ell_i}$  is called an\textbf{ L-admissible symbol} if for all $t<i$ and all $q<\ell_t$, $S_q\nsubseteq \bigcup_{k=\ell_t}^{\ell_i} S_k$. An L-admissible symbol
$e_{\ell_1, \dots ,\ell_i}$ is said to be a \textbf{maximal L-admissible symbol} if there is no another L-admissible symbol  $e_{k_1, \dots, k_t}$ such that $\{\ell_1, \dots , \ell_i\} \subseteq \{k_1, \dots, k_t\}$ (see \cite{Barile}). A \textbf{Lyubeznik resolution} of $R/I(\mathcal{H})$ is a subcomplex of the Taylor resolution constructed as follows
$$L_\bullet : \dots \rightarrow L_{i+1} \st{\sigma_{i+1}}{\longrightarrow} L_i \st{\sigma_i}{\longrightarrow} L_{i-1}\rightarrow  \dots \rightarrow  L_0 \rightarrow R/I(\mathcal{H}) \rightarrow 0,$$
 where $L_0=R$ and for each integer $i>0$, $L_i$ is the free $R$-module whose free generators are all L-admissible symbols $e_{\ell_1, \dots , \ell_i}$  (see \cite{Lyubeznik}). Also, for each $i\geq 1$ and $e_{\ell_1, \dots , \ell_i}\in L_i$,
$$\sigma_i(e_{\ell_1, \dots , \ell_i})=\sum_{k=1}^i(-1)^{k}\mu_ke_{\ell_1, \dots, \widehat{\ell_k}, \dots,  \ell_i},$$
where $\mu_k$ is as in the Taylor resolution.  A Lyubeznik resolution also gives a free resolution for $R/I(\mathcal{H})$ which is not necessarily minimal but similar to above discussions, it can be also used for computing the graded Betti numbers of $R/I(\mathcal{H})$. Note that a Lyubeznik resolution of $R/I(\mathcal{H})$ depends on the order which is considered on the edges of $\mathcal{H}$.

The following lemma plays a key role in the sequel.

\begin{lemma}\label{Lemma2.2}
Let $i,j$ be integers. Set
$$\mathcal{B}_{i,j}=\{\overline{e_{\ell_1, \dots , \ell_i}} \ | \ \overline{e_{\ell_1, \dots , \ell_i}}\in \mathrm{Ker}\overline{\partial}_i\setminus \mathrm{Im}\overline{\partial}_{i+1}, |\bigcup_{k=1}^iS_{\ell_k}|=j\}.$$
\begin{itemize}
\item[1.] If for each $\{S_1, \dots , S_i\}\subseteq \mathcal{E}(\mathcal{H})$ with $|\bigcup_{\ell=1}^iS_\ell |=j$ such that $S_k$s are pairwise distinct, we have $S_1\nsubseteq \bigcup_{\ell=2}^iS_\ell$, then
$\{e+\mathrm{Im}\overline{\partial}_{i+1} \ | \ e\in \mathcal{B}_{i,j}\}$ is a generating set for $(\mathrm{Ker}\overline{\partial}_i/\mathrm{Im}\overline{\partial}_{i+1})_j$ over $K$
 and so
$$\beta_{i,j}(R/I(\mathcal{H}))\leq |\mathcal{B}_{i,j}|.$$

\item[2.]  If for each $\{S_1, \dots , S_{i+1}\}\subseteq \mathcal{E}(\mathcal{H})$ with $S_{i+1}\subseteq \bigcup_{\ell =1}^i S_\ell$ and $|\bigcup_{\ell=1}^iS_\ell |=j$ such that $S_k$s are pairwise distinct,  we have $S_k\nsubseteq \bigcup_{1\leq \ell \leq i+1,\ell\neq k}S_\ell$ for all $1\leq k \leq i$, then
$\{e+\mathrm{Im}\overline{\partial}_{i+1}\ | \ e\in \mathcal{B}_{i,j}\}$ is a linear independent subset of $(\mathrm{Ker}\overline{\partial}_i/\mathrm{Im}\overline{\partial}_{i+1})_j$  over $K$ and so
$$\beta_{i,j}(R/I(\mathcal{H}))\geq |\mathcal{B}_{i,j}|.$$

\item[3.] Suppose that $\mathcal{H}$, $i$ and $j$ satisfy the assumptions of Parts 1 and 2. Then
$$\beta_{i,j}(R/I(\mathcal{H}))=|\mathcal{B}_{i,j}|.$$
In particular when $j=ti$ where $t=\max\{|S| \ | \ S\in \mathcal{E}(\mathcal{H})\}$, we have this equality.

\end{itemize}
\end{lemma}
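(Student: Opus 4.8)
The plan is to work entirely inside the $K$-complex $\overline{T}_\bullet$ obtained by tensoring the Taylor resolution with $K=R/\langle x_1,\dots,x_n\rangle$, using formula \eqref{equation4} for $\overline{\partial}$ together with the identity $\beta_{i,j}(R/I(\mathcal{H}))=\dim_K(\mathrm{Ker}\overline{\partial}_i/\mathrm{Im}\overline{\partial}_{i+1})_j$ from \eqref{equation3}. The first thing I would record is that $\overline{\partial}$ is homogeneous for the internal grading: in \eqref{equation4} a summand $\overline{e_{\ell_1,\dots,\widehat{\ell_k},\dots,\ell_i}}$ survives only when $S_{\ell_k}\subseteq\bigcup_{t\neq k}S_{\ell_t}$, and in that case deleting $S_{\ell_k}$ leaves the union unchanged, so every surviving summand again has union of size $j$. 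Hence I may work in a single internal degree $j$ at a time, where $(\overline{T}_i)_j$ has the $K$-basis $\mathcal{G}_{i,j}$ consisting of all symbols $\overline{e_{\ell_1,\dots,\ell_i}}$ with $|\bigcup_t S_{\ell_t}|=j$, and $\mathcal{B}_{i,j}\subseteq\mathcal{G}_{i,j}$.

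For Part 1, the hypothesis says that no one of any $i$ pairwise distinct degree-$j$ edges lies in the union of the others; by \eqref{equation4} this means every basis symbol of $\mathcal{G}_{i,j}$ is a cycle, so $(\mathrm{Ker}\overline{\partial}_i)_j=(\overline{T}_i)_j$. Consequently the classes of all of $\mathcal{G}_{i,j}$ span the quotient $(\mathrm{Ker}\overline{\partial}_i/\mathrm{Im}\overline{\partial}_{i+1})_j$. I would then discard those symbols of $\mathcal{G}_{i,j}$ that already lie in $\mathrm{Im}\overline{\partial}_{i+1}$, whose classes are zero; what remains is precisely $\mathcal{B}_{i,j}$, which therefore still spans and gives $\beta_{i,j}(R/I(\mathcal{H}))\le|\mathcal{B}_{i,j}|$.

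For Part 2, the key step, which I expect to be the main obstacle, is to unwind the hypothesis into the combinatorial statement that among any $i+1$ distinct edges whose union has size $j$, \emph{at most one} is redundant (contained in the union of the others). Indeed, if two were both redundant, naming one of them $S_{i+1}$ would place the other among $S_1,\dots,S_i$ and violate the displayed conclusion $S_k\nsubseteq\bigcup_{1\le\ell\le i+1,\ell\neq k}S_\ell$. Granting this, \eqref{equation4} shows that $\overline{\partial}_{i+1}$ sends each degree-$j$ basis symbol of $\overline{T}_{i+1}$ either to $0$ or to $\pm$ a single basis symbol, which (deleting a redundant edge preserving the union) lies in $\mathcal{G}_{i,j}$; such an image symbol belongs to $\mathrm{Im}\overline{\partial}_{i+1}$ and also to $\mathrm{Ker}\overline{\partial}_i$ since $\overline{\partial}_i\overline{\partial}_{i+1}=0$, hence is \emph{not} in $\mathcal{B}_{i,j}$. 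Therefore $(\mathrm{Im}\overline{\partial}_{i+1})_j$ lies in the span of $\mathcal{G}_{i,j}\setminus\mathcal{B}_{i,j}$. A relation $\sum_{e\in\mathcal{B}_{i,j}}c_e\,e\in\mathrm{Im}\overline{\partial}_{i+1}$ then equates two disjoint parts of the single basis $\mathcal{G}_{i,j}$, forcing every $c_e=0$; thus the classes of $\mathcal{B}_{i,j}$ are $K$-linearly independent and $\beta_{i,j}(R/I(\mathcal{H}))\ge|\mathcal{B}_{i,j}|$.

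Part 3 is just the combination of Parts 1 and 2. For the final assertion I would verify that $j=ti$, with $t=\max\{|S|\mid S\in\mathcal{E}(\mathcal{H})\}$, forces both hypotheses. If $i$ distinct edges, each of size at most $t$, have union of size $ti$, they must be pairwise disjoint $t$-sets, so none is contained in the union of the others, verifying the hypothesis of Part 1. For Part 2, given $i+1$ edges with $S_{i+1}\subseteq\bigcup_{\ell=1}^i S_\ell$ and $|\bigcup_{\ell=1}^i S_\ell|=ti$, the first $i$ are again pairwise disjoint $t$-sets; for $k\le i$ the set $S_k$ meets $\bigcup_{\ell\le i,\ell\neq k}S_\ell$ trivially, so $S_k$ could be redundant only if $S_k\subseteq S_{i+1}$, which is impossible since $|S_k|=t\ge|S_{i+1}|$ and the edges of a simple hypergraph are pairwise incomparable. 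Hence both hypotheses hold and the equality follows.
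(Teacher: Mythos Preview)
Your proof is correct and follows essentially the same approach as the paper. For Part~1 both you and the paper observe that the hypothesis forces $(\mathrm{Ker}\overline{\partial}_i)_j=(\overline{T}_i)_j$; for Part~2 you isolate the combinatorial consequence ``at most one redundant edge among any $i{+}1$ with union of size $j$'' and deduce that $(\mathrm{Im}\overline{\partial}_{i+1})_j$ lies in the span of $\mathcal{G}_{i,j}\setminus\mathcal{B}_{i,j}$, while the paper reaches the same contradiction by tracing a single basis vector backward---the underlying observation (that $\overline{\partial}_{i+1}$ sends each degree-$j$ symbol to $0$ or $\pm$ a single symbol, which then lies in the image and hence outside $\mathcal{B}_{i,j}$) is identical.
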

\begin{proof}
\begin{itemize}
\item[1.] By our assumption, it can be easily seen that $(\overline{T_i})_j=(\mathrm{Ker}\overline{\partial}_i)_j$. This immediately implies the result.
\item[2.] Suppose on contrary that there are $\overline{e_{\ell_1, \dots , \ell_i}}$s in $\mathcal{B}_{i,j}$ and non-zero elements
$r_\ell$ in $K$, where $\ell=(\ell_1, \dots , \ell_i)$ such that
$$\sum_\ell r_\ell \overline{e_{\ell_1, \dots , \ell_i}}\in \mathrm{Im}\overline{\partial}_{i+1}.$$
Then there are sequences $\ell '=(\ell_1', \dots , \ell_{i+1}')$ with $1\leq \ell '_1 < \dots < \ell '_{i+1} \leq m$ and elements $r'_{\ell'}$ in $K$ such that 
\begin{align*}
\sum_\ell r_\ell \overline{e_{\ell_1, \dots , \ell_i}}&= \overline{\partial}_{i+1}(\sum_{\ell '} r'_{\ell'} \overline{e_{\ell '_1, \dots , \ell'_{i+1}}})\\
&= \sum_{\ell'} r'_{\ell'} \overline{\partial}_{i+1}(\overline{e_{\ell '_1, \dots , \ell'_{i+1}}})\\
&=\sum_{\ell'}r'_{\ell'}(\sum_{k, S_{\ell'_k}\subseteq \bigcup_{1\leq t \leq i+1,t\neq k} S_{\ell'_t}}(-1)^k\overline{e_{\ell '_1, \dots,   \widehat{\ell '_k},\dots , \ell'_{i+1}}}).
\end{align*}
So each $ \overline{e_{\ell_1, \dots , \ell_i}}$ which exists in the left side should appear in the right side. That is there exists a sequence $1\leq \ell'_1 < \dots < \ell'_{i+1} \leq m$ and an integer $1\leq k \leq i+1$ such that $\{\ell '_1, \dots,   \widehat{\ell '_k},\dots , \ell '_{i+1}\}=\{\ell_1, \dots , \ell_i\}$ with the condition $S_{\ell'_k}\subseteq \bigcup_{1\leq t \leq i+1,t\neq k} S_{\ell '_t}$. Then by our assumption, $ \overline{\partial}_{i+1}(\overline{e_{\ell '_1, \dots , \ell'_{i+1}}})=\overline{e_{\ell_1, \dots , \ell_i}}$, which contradicts to $\overline{e_{\ell_1, \dots , \ell_i}} \in \mathcal{B}_{i,j}$.
\item[3.] follows from Parts 1 and 2. Also, note that if $|\bigcup_{\ell=1}^iS_\ell |=ti$, then $S_\ell$s should be disjoint and hence the assumptions of Parts 1 and 2 hold.
\end{itemize}
\end{proof}

One can easily check that if $i=1,2$ or $j=ti-1$ ($t$ is as defined in Lemma \ref{Lemma2.2}), or $\mathcal{H}$ is a graph which is a set of bouquets, then the assumption of Part 1 of Lemma \ref{Lemma2.2} holds. Although the assumptions of Lemma \ref{Lemma2.2} looks so restrictive specially for graphs, but for hypergraphs with large cardinality of edges, they are not so. For instance, if $\mathcal{H}$ is a $d$-uniform hypergraph in which the intersection of every two edges has at most one element, then for each integers $i\leq d$ and $j$ the assumptions of Part 1 of Lemma \ref{Lemma2.2} holds.

\begin{remarks}\label{remarks2.2}
\begin{itemize}
\item[1.]  By means of the equality (\ref{equation4}), we see that $\overline{e_{\ell_1, \dots ,  \ell_i}}\in \mathrm{Ker}\overline{\partial}_i$ if and only if for all $1\leq k \leq i$, $S_{\ell_k}\nsubseteq \bigcup _{1\leq t\leq i,t\neq k} S_{\ell_t}$. If $\overline{e_{\ell_1, \dots , \ell_i}}\in \mathrm{Im}\overline{\partial}_{i+1}$, then there exists $S\in \mathcal{E}(\mathcal{H})\setminus \{S_{\ell_1}, \dots , S_{\ell_i}\}$ such that $S\subseteq \bigcup_{t=1}^iS_{\ell_t}$. Moreover if there exists  $S\in \mathcal{E}(\mathcal{H})\setminus \{S_{\ell_1}, \dots , S_{\ell_i}\}$ which is contained in $\bigcup _{t =1}^i S_{\ell_t}$ and for each $1\leq k \leq i$, $S_{\ell_k} \nsubseteq S\cup \bigcup_{1\leq t \leq i,t\neq k}S_{\ell_t}$, then $\overline{e_{\ell_1,\dots , \ell_i}}\in \mathrm{Im}\overline{\partial}_{i+1}$.
\item[2.] Similar to Part 1, we see that $\overline{e_{\ell_1, \dots ,  \ell_i}}\in \mathrm{Ker}\overline{\sigma}_i$ if and only if for all $1\leq k \leq i$, $S_{\ell_k}\nsubseteq \bigcup _{1\leq t \leq i,t\neq k} S_{\ell_t}$. Moreover, clearly if $e_{\ell_1, \dots , \ell_i}$ is a maximal L-admissible symbol, then
$\overline{e_{\ell_1, \dots ,\ell_ i}}\notin \mathrm{Im}\overline{\sigma}_{i+1}$.
\item[3.] If $\mathcal{S}$ is a self semi-induced matching in $\mathcal{H}$, then the symbol associated to $\mathcal{S}$ is an L-admissible symbol under any ordering on edges of $\mathcal{H}$ and vise versa.
\item[4.]  If $\mathcal{S}=\{S_{\ell_1}, \dots , S_{\ell_i}\}$ is a self ordered set of edges in $\mathcal{H}$, then $e_{\ell_1,\dots ,\ell_ i}$ is a maximal L-admissible symbol under any ordering of edges of $\mathcal{H}$ in form of $S_{\ell_1}, \dots , S_{\ell_i}, \mathcal{E}(\mathcal{H})\setminus \mathcal{S}$.
\item[5.]  Assume that $\mathcal{S}=\{S_{\ell_1}, \dots , S_{\ell_i}\}$ is a family of edges of $\mathcal{H}$ such that  for all $1\leq k \leq i$, $S_{\ell_k}\nsubseteq \bigcup _{1\leq t \leq i,t\neq k} S_{\ell_t}$. Then the symbol associated to $\mathcal{S}$ is an L-admissible symbol under any ordering of edges of $\mathcal{H}$ in form of  $\mathcal{S}, (\mathcal{E}(\mathcal{H})\setminus \mathcal{S})$ and vise versa.
\item[6.] Let $S$ be a self semi-disjoint set with a semi-induced matching $\mathcal{S}_0$ as in Definitions \ref{1.2}(6). Then Parts 3 and 5 imply that the symbols associated to $(\mathcal{S}\setminus \mathcal{S}_0), \mathcal{S}_0$ are L-admissible symbols with respect to the ordering
$(\mathcal{S}\setminus \mathcal{S}_0), (\mathcal{E}(\mathcal{H})\setminus \mathcal{S}), \mathcal{S}_0$ or
$(\mathcal{S}\setminus \mathcal{S}_0), \mathcal{S}_0, (\mathcal{E}(\mathcal{H})\setminus \mathcal{S})$.
\end{itemize}
\end{remarks}
The following lemma is needed for our next theorem.
\begin{lemma}\label{3.7.1}
Let $i,j$ be integers. Set
$$\mathcal{B}_{i,j}=\{\overline{e_{\ell_1, \dots , \ell_i}} \ | \ \overline{e_{\ell_1, \dots , \ell_i}}\in \mathrm{Ker}\overline{\partial}_i\setminus \mathrm{Im}\overline{\partial}_{i+1}, |\bigcup_{k=1}^iS_{\ell_k}|=j\}.$$
Then
\begin{align*}
&|\{\mathcal{S}\ | \ \mathcal{S} \textrm{ is a self semi-induced matching  in  } \mathcal{H} \textrm{ of type }(i,j)\}| \leq \\
& |\mathcal{B}_{i,j}| \leq \\
&|\{\mathcal{S}\ | \ \mathcal{S} \textrm{ is a self-contained semi-induced matching  in  } \mathcal{H} \textrm{ of type }(i,j)\}|.
\end{align*}
\end{lemma}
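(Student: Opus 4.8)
The plan is to read off the defining conditions of $\mathcal{B}_{i,j}$ through the combinatorial dictionary provided by Remarks \ref{remarks2.2}(1), and then to match these conditions term-by-term against the definitions of self semi-induced matching and self-contained semi-induced matching in Definitions \ref{1.2}. Throughout, I identify a symbol $\overline{e_{\ell_1,\dots,\ell_i}}$ (with $\ell_1<\dots<\ell_i$) of degree $j$ with the family $\mathcal{S}=\{S_{\ell_1},\dots,S_{\ell_i}\}$ of edges whose indices it records. Since in a simple hypergraph distinct edges are genuinely distinct subsets, this identification is a bijection between degree-$j$ symbols and $i$-element families of edges with $|\bigcup_\ell S_{\ell}|=j$, so it suffices to compare the three relevant families of edge-sets as subsets of this common index set, and injectivity of all the maps below is automatic.

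For the left-hand inequality, I would start from a self semi-induced matching $\mathcal{S}$ of type $(i,j)$ and show its associated symbol lies in $\mathcal{B}_{i,j}$. Condition (ii) of Definitions \ref{1.2}(3), namely $S_{\ell_k}\nsubseteq\bigcup_{t\neq k}S_{\ell_t}$ for every $k$, is by Remarks \ref{remarks2.2}(1) exactly the statement that the symbol lies in $\mathrm{Ker}\overline{\partial}_i$. For non-membership in the image I would use the contrapositive of the first assertion of Remarks \ref{remarks2.2}(1): being in $\mathrm{Im}\overline{\partial}_{i+1}$ forces some edge $S\notin\mathcal{S}$ with $S\subseteq\bigcup_\ell S_{\ell}$, and the semi-induced matching condition (i) precisely forbids this. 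Since $|\bigcup_\ell S_{\ell}|=j$, the symbol lies in $\mathcal{B}_{i,j}$, giving the desired injection.

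For the right-hand inequality I would run the correspondence in the other direction, attaching to each $\overline{e_{\ell_1,\dots,\ell_i}}\in\mathcal{B}_{i,j}$ the family $\mathcal{S}=\{S_{\ell_1},\dots,S_{\ell_i}\}$ and showing it is a self-contained semi-induced matching of type $(i,j)$. Kernel membership again yields condition (ii) of Definitions \ref{1.2}(4) verbatim through Remarks \ref{remarks2.2}(1), and in particular forces the edges to be pairwise distinct so that the type is genuinely $(i,j)$. For condition (i) I would argue by contraposition: if it failed, there would be an edge $S\notin\mathcal{S}$ with $S\subseteq\bigcup_\ell S_{\ell}$ and $S_{\ell_k}\nsubseteq S\cup\bigcup_{t\neq k}S_{\ell_t}$ for all $k$, which is exactly the hypothesis of the ``moreover'' clause of Remarks \ref{remarks2.2}(1) and hence would place the symbol in $\mathrm{Im}\overline{\partial}_{i+1}$, contradicting $\overline{e_{\ell_1,\dots,\ell_i}}\in\mathcal{B}_{i,j}$.

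The bookkeeping of types and the injectivity of both assignments are routine. The one step needing care—and the main obstacle—is checking that the negation of condition (i) of the self-contained semi-induced matching coincides, clause for clause, with the hypothesis of the \emph{sufficient} condition for image membership in Remarks \ref{remarks2.2}(1). This is also where the two-sided nature of the estimate originates: the self semi-induced matching condition rules out the weaker \emph{necessary} condition for lying in the image, while the self-contained condition only rules out the stronger sufficient one, which is exactly why $\mathcal{B}_{i,j}$ is sandwiched between the two counts rather than equal to either.
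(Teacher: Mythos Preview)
Your proposal is correct and follows essentially the same approach as the paper: both arguments consist of translating the conditions defining $\mathcal{B}_{i,j}$ into combinatorial language via Remarks~\ref{remarks2.2}(1), matching kernel membership with condition~(ii) and using the necessary (resp.\ sufficient) condition for image membership to handle the self semi-induced (resp.\ self-contained semi-induced) case. The paper's proof is simply a terse two-sentence version of what you have spelled out in detail.
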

\begin{proof}
By means of Part 1 of Remarks \ref{remarks2.2} if $\{S_{\ell_1}, \dots , S_{\ell_i}\}$ is a self semi-induced matching of type $(i,j)$, then $\overline{e_{\ell_1, \dots , \ell_i}}\in (\mathrm{Ker}\overline{\partial}_{i})_j\setminus (\mathrm{Im}\overline{\partial}_{i+1})_j$. Furthermore
$\overline{e_{\ell_1, \dots , \ell_i}}\in (\mathrm{Ker}\overline{\partial}_{i})_j \setminus (\mathrm{Im}\overline{\partial}_{i+1})_j$ implies that $\{S_{\ell_1}, \dots , S_{\ell_i}\}$ is a self-contained semi-induced matching of type $(i,j)$ in $\mathcal{H}$.  These complete the proof.
\end{proof}

Now, we are ready to state one of our main results of this paper.
\begin{theorem}\label{3.7}
For given integers $i$ and $j$ the following statements occur.
\begin{itemize}
\item[1.] If for each $\{S_1, \dots , S_{i+1}\}\subseteq \mathcal{E}(\mathcal{H})$ with $S_{i+1}\subseteq \bigcup_{\ell =1}^i S_\ell$ and $|\bigcup_{\ell=1}^iS_\ell |=j$ we have $S_k\nsubseteq \bigcup_{1\leq \ell \leq i+1,\ell\neq k}S_\ell$ for all $1\leq k \leq i$, then
$$|\{\mathcal{S}\ | \ \mathcal{S} \textrm{ is a self semi-induced matching  in  } \mathcal{H} \textrm{ of type }(i,j)\}| \leq  \beta_{i,j}(R/I(\mathcal{H})).$$
\item[2.] If for each $\{S_1, \dots , S_i\}\subseteq \mathcal{E}(\mathcal{H})$ with $|\bigcup_{\ell=1}^iS_\ell |=j$ we have $S_1\nsubseteq \bigcup_{\ell=2}^iS_\ell$, then
$$\beta_{i,j}(R/I(\mathcal{H}))\leq |\{\mathcal{S}\ | \ \mathcal{S} \textrm{ is a self-contained semi-induced matching  in  } \mathcal{H} \textrm{ of type }(i,j)\}|.$$
\end{itemize}
\end{theorem}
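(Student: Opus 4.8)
The plan is to treat the cardinality $|\mathcal{B}_{i,j}|$ of the set introduced in Lemmas \ref{Lemma2.2} and \ref{3.7.1} as a bridge between the algebraic invariant $\beta_{i,j}(R/I(\mathcal{H}))$ and the two combinatorial counts appearing in the statement. Indeed, Lemma \ref{3.7.1} already sandwiches $|\mathcal{B}_{i,j}|$ \emph{unconditionally} between the number of self semi-induced matchings and the number of self-contained semi-induced matchings of type $(i,j)$, while Lemma \ref{Lemma2.2} compares $\beta_{i,j}(R/I(\mathcal{H}))$ with $|\mathcal{B}_{i,j}|$ under suitable hypotheses. So each part of the theorem should follow by chaining the appropriate inequality of Lemma \ref{3.7.1} with the matching part of Lemma \ref{Lemma2.2}; no new construction is needed.

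For Part 1, I would first observe that the stated hypothesis is precisely the assumption of Lemma \ref{Lemma2.2}(2). That lemma then yields $\beta_{i,j}(R/I(\mathcal{H})) \geq |\mathcal{B}_{i,j}|$. Combining this with the left-hand inequality of Lemma \ref{3.7.1}, namely
$$|\{\mathcal{S}\ | \ \mathcal{S} \textrm{ is a self semi-induced matching of type }(i,j)\}| \leq |\mathcal{B}_{i,j}|,$$
gives the desired lower bound for $\beta_{i,j}(R/I(\mathcal{H}))$.

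For Part 2, the hypothesis is exactly the assumption of Lemma \ref{Lemma2.2}(1), which supplies $\beta_{i,j}(R/I(\mathcal{H})) \leq |\mathcal{B}_{i,j}|$. Chaining this with the right-hand inequality of Lemma \ref{3.7.1}, namely $|\mathcal{B}_{i,j}| \leq |\{\mathcal{S}\ | \ \mathcal{S} \textrm{ is a self-contained semi-induced matching of type }(i,j)\}|$, produces the claimed upper bound. The only point that requires care, and the closest thing to an obstacle here, is to pair the directions correctly: the lower bound in Part 1 must draw on Lemma \ref{Lemma2.2}(2) together with the lower inequality of Lemma \ref{3.7.1}, while the upper bound in Part 2 must draw on Lemma \ref{Lemma2.2}(1) together with the upper inequality. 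Since both auxiliary lemmas are already established, there is no genuine difficulty beyond verifying that the hypotheses of the theorem coincide verbatim with those of the relevant parts of Lemma \ref{Lemma2.2} and then performing this bookkeeping.
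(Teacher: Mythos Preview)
Your proposal is correct and matches the paper's approach exactly: the paper's proof is the one-line statement that the results ``immediately follow from Lemmas \ref{Lemma2.2} and \ref{3.7.1},'' and you have simply spelled out the chaining of inequalities in detail. The pairing you describe---Lemma \ref{Lemma2.2}(2) with the left inequality of Lemma \ref{3.7.1} for Part 1, and Lemma \ref{Lemma2.2}(1) with the right inequality for Part 2---is precisely what is intended.
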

\begin{proof}
The results immediately follow from Lemmas \ref{Lemma2.2} and \ref{3.7.1}.
\end{proof}

The following corollary shows that Theorem \ref{3.7} generalizes some results in \cite{Ha+Vantuyl}, \cite{Katzman} and  \cite{M+V}.
\begin{corollary}(Compare \cite[Theorem 6.5]{Ha+Vantuyl}, \cite[Lemma 2.2]{Katzman} and \cite[Corollary 3.9]{M+V}.)\label{C3.7}
Suppose that $i$ and $j$ are  integers and
$$t=\max\{|S| \ | \ S\in \mathcal{E}(\mathcal{H})\}.$$ Then
\begin{itemize}
\item[1.] $\beta_{i,ti}(R/I(\mathcal{H}))$ is the number of induced matchings of $\mathcal{H}$ of size $i$ consisting of $t$-sets;
\item[2.] $\mathrm{reg}(R/I(\mathcal{H}))\geq (t-1)a_{\mathcal{H},t}$; and
\item[3.] if for all $i\geq e_{\mathcal{H}}$  the assumption of Part 1 of Lemma \ref{Lemma2.2} holds, then $\mathrm{pd}(R/I(\mathcal{H}))\leq e_{\mathcal{H}}$.
\end{itemize}
\end{corollary}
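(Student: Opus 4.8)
The plan is to derive all three parts from the machinery already in place, namely Lemma \ref{Lemma2.2}, Lemma \ref{3.7.1} and Theorem \ref{3.7}, by exploiting the fact that the top graded strand $j=ti$ forces an unusually rigid structure on the edges involved. For Part 1 I would fix $i$ and specialize to $j=ti$. The key observation, already recorded in the proof of Lemma \ref{Lemma2.2}(3), is that if $\{S_1,\dots,S_i\}\subseteq\mathcal{E}(\mathcal{H})$ satisfies $|\bigcup_{\ell=1}^i S_\ell|=ti$, then since each $|S_\ell|\le t$ one must have $|S_\ell|=t$ for all $\ell$ and the $S_\ell$ pairwise disjoint. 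Consequently both hypotheses of Theorem \ref{3.7} hold automatically for the pair $(i,ti)$: disjointness immediately gives $S_1\nsubseteq\bigcup_{\ell\ge 2}S_\ell$, and in the presence of an extra edge $S_{i+1}\subseteq\bigcup_{\ell=1}^i S_\ell$ it forces $S_k\nsubseteq\bigcup_{\ell\ne k}S_\ell$ as well (otherwise disjointness would yield $S_k\subseteq S_{i+1}$, hence $S_k=S_{i+1}$ by $|S_{i+1}|\le t=|S_k|$, contradicting distinctness). Thus Theorem \ref{3.7} sandwiches $\beta_{i,ti}(R/I(\mathcal{H}))$ between the number of self semi-induced matchings of type $(i,ti)$ and the number of self-contained semi-induced matchings of type $(i,ti)$.

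It then remains to check that for type $(i,ti)$ these two counts coincide and equal the number of induced matchings of size $i$ consisting of $t$-sets. I would argue this directly using disjointness together with the simplicity of $\mathcal{H}$. Because the $t$-sets are disjoint, condition (ii) in Definitions \ref{1.2}(3),(4) is automatic, and for any edge $S\ne S_k$ with $S\subseteq\bigcup_\ell S_\ell$ the relation $S_k\subseteq S\cup\bigcup_{\ell\ne k}S_\ell$ reduces (again by disjointness) to $S_k\subseteq S$, which by $|S|\le t=|S_k|$ and simplicity forces $S=S_k$, a contradiction. Hence the self-contained condition collapses to the plain semi-induced requirement that $S\nsubseteq\bigcup_\ell S_\ell$ for all $S\notin\{S_k\}$, i.e. to being an induced matching of $t$-sets; the same reasoning identifies self semi-induced matchings of type $(i,ti)$ with induced matchings of $t$-sets. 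With the two bounds equal, Part 1 follows. (Alternatively one can bypass Theorem \ref{3.7} and invoke $\beta_{i,ti}=|\mathcal{B}_{i,ti}|$ from Lemma \ref{Lemma2.2}(3) directly, then identify $\mathcal{B}_{i,ti}$ with the set of such induced matchings by the same disjointness argument via Remarks \ref{remarks2.2}(1).)

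Parts 2 and 3 are immediate. For Part 2, set $i=a_{\mathcal{H},t}$; by definition there is an induced matching of size $i$ consisting of $t$-sets, so Part 1 gives $\beta_{i,ti}\ge 1\ne 0$, whence $\mathrm{reg}(R/I(\mathcal{H}))\ge ti-i=(t-1)a_{\mathcal{H},t}$. For Part 3, I would show $\beta_{i,j}=0$ for every $i>e_{\mathcal{H}}$ and every $j$. For such $i$ the standing hypothesis lets us apply Lemma \ref{Lemma2.2}(1) to get $\beta_{i,j}\le|\mathcal{B}_{i,j}|$, while Lemma \ref{3.7.1} bounds $|\mathcal{B}_{i,j}|$ by the number of self-contained semi-induced matchings of type $(i,j)$; since $i>e_{\mathcal{H}}$ equals more than the maximal size of such a matching, this number is $0$. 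Hence $\beta_{i,j}=0$ for all $i>e_{\mathcal{H}}$, and therefore $\mathrm{pd}(R/I(\mathcal{H}))\le e_{\mathcal{H}}$.

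The only genuinely delicate step is the combinatorial collapse in Part 1: one must carefully use both the disjointness forced by $j=ti$ and the simplicity assumption $S\nsubseteq S'$ to show that the self-contained and self semi-induced notions, which by Remarks \ref{remarks1.3} are a priori strictly weaker or stronger than an induced matching, become equivalent to it in this extremal degree. Once that is established, Parts 2 and 3 are purely formal consequences.
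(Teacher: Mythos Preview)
Your proof is correct and follows essentially the same route as the paper. The paper derives Part 1 from Lemma \ref{Lemma2.2}(3) together with Lemma \ref{3.7.1} and the observation that $j=ti$ forces a matching (making all four notions in Remarks \ref{remarks1.3} coincide), then reads off Parts 2 and 3 from Part 1 and Theorem \ref{3.7}(2) respectively; you go through Theorem \ref{3.7} first and then collapse the two counts, but since Theorem \ref{3.7} is itself just Lemmas \ref{Lemma2.2} and \ref{3.7.1} combined (and you even note the direct route as an alternative), the arguments are the same in substance.
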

\begin{proof}
\begin{itemize}
\item[1.]  Note that if $\mathcal{S}=\{S_{\ell_1}, \dots , S_{\ell_i}\}$ is a family of edges of $\mathcal{H}$ with $ti=|\bigcup_{t =1}^i S_{\ell_t} |$, then clearly $\mathcal{S}$ is a matching. So, for such an $\mathcal{S}$, being  an induced matching, a semi-induced matching, a self semi-induced matching and a self-contained semi-induced matching are equivalent. Therefore, the result immediately follows from Lemma \ref{3.7.1} and Part 3 of Lemma \ref{Lemma2.2}.
\item[2.] can be gained from Part 1.
\item[3.]  immediately follows from Part 2 of Theorem \ref{3.7}.
\end{itemize}
\end{proof}

The following result is another main result of this section which is a generalization of Proposition 2.5 in \cite{Katzman}.

\begin{theorem}(Compare \cite[Proposition 2.5]{Katzman}.)\label{T3.7}
\begin{itemize}
\item[1.] If there is an induced matching or a self semi-induced matching of $\mathcal{H}$ of type $(i,j)$, then $\beta_{i,j}(R/I(\mathcal{H}))\neq 0$.
\item[2.]  If there is a self ordered set of edges of $\mathcal{H}$ of type $(i,j)$, then $\beta_{i,j}(R/I(\mathcal{H}))\neq 0$.
\item[3.]  $\max\{b_{\mathcal{H}},c_\mathcal{H}\}\leq \mathrm{pd}(R/I(\mathcal{H}))$ and $\max\{b'_{\mathcal{H}},c'_\mathcal{H}\}\leq \mathrm{reg}(R/I(\mathcal{H}))$.
\end{itemize}
\end{theorem}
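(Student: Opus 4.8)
The plan is to realize each hypothesized combinatorial object as an explicit homology class in a (non-minimal) free resolution of $R/I(\mathcal{H})$ and then to deduce non-vanishing of the relevant Betti number from the fact that the associated symbol sits in a kernel but not in the corresponding image. I will use equation (\ref{equation3}), which computes $\beta_{i,j}(R/I(\mathcal{H}))$ as $\mathrm{dim}_K(\mathrm{Ker}\overline{\partial}_i/\mathrm{Im}\overline{\partial}_{i+1})_j$ from the Taylor complex, together with the analogous description for the Lyubeznik complex $L_\bullet$.

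For Part 1, note that an induced matching is in particular a self semi-induced matching by Remarks \ref{remarks1.3}(1), so it suffices to treat a self semi-induced matching $\mathcal{S}=\{S_{\ell_1},\dots,S_{\ell_i}\}$ of type $(i,j)$. Condition (ii) of the definition, via the criterion in Remarks \ref{remarks2.2}(1), puts $\overline{e_{\ell_1,\dots,\ell_i}}$ into $\mathrm{Ker}\overline{\partial}_i$; the semi-induced condition (i) says no other edge is contained in $\bigcup_k S_{\ell_k}$, and the contrapositive of Remarks \ref{remarks2.2}(1) then forces $\overline{e_{\ell_1,\dots,\ell_i}}\notin\mathrm{Im}\overline{\partial}_{i+1}$. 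This symbol has degree $j$ and represents a nonzero homology class, so $\beta_{i,j}(R/I(\mathcal{H}))\neq 0$.

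For Part 2, the Taylor complex no longer suffices, because a self ordered set of edges need not be a semi-induced matching: some edge may well lie inside $\bigcup_k S_{\ell_k}$, so $\overline{e_{\ell_1,\dots,\ell_i}}$ could a priori be a Taylor boundary. I would instead pass to the Lyubeznik resolution built from the ordering $S_{\ell_1},\dots,S_{\ell_i},\mathcal{E}(\mathcal{H})\setminus\mathcal{S}$. By Remarks \ref{remarks2.2}(4) the associated symbol is then a maximal L-admissible symbol; condition (i) of the self ordered set places it in $\mathrm{Ker}\overline{\sigma}_i$ by Remarks \ref{remarks2.2}(2), and maximality yields $\overline{e_{\ell_1,\dots,\ell_i}}\notin\mathrm{Im}\overline{\sigma}_{i+1}$ by the same remark. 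Since $L_\bullet$ also computes the graded Betti numbers, we again obtain a nonzero class in degree $j$, giving $\beta_{i,j}(R/I(\mathcal{H}))\neq 0$.

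Part 3 is then immediate from the definitions of $\mathrm{pd}$ and $\mathrm{reg}$ as the maxima of $i$ and of $j-i$ over pairs with $\beta_{i,j}\neq 0$: choosing a self semi-induced matching or self ordered set that realizes $b_{\mathcal{H}}$ or $c_{\mathcal{H}}$ (respectively $b'_{\mathcal{H}}$ or $c'_{\mathcal{H}}$) and applying Part 1 or Part 2 produces the required nonzero Betti number. The one genuinely delicate point is the choice of resolution: the Taylor complex certifies non-vanishing only when the symbol is not a boundary, which is exactly the semi-induced matching situation, whereas self ordered sets force us to the finer Lyubeznik differential, where maximality of the L-admissible symbol serves as the substitute for the missing semi-induced property.
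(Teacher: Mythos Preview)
Your proposal is correct and follows essentially the same route as the paper: the Taylor resolution together with Remarks~\ref{remarks2.2}(1) for Part~1, the Lyubeznik resolution with the ordering $S_{\ell_1},\dots,S_{\ell_i},\mathcal{E}(\mathcal{H})\setminus\mathcal{S}$ together with Remarks~\ref{remarks2.2}(2) and (4) for Part~2, and the immediate deduction of Part~3 from the definitions. Your added commentary (reducing Part~1 to the self semi-induced case and explaining why the Taylor complex does not suffice for Part~2) is helpful but not a departure from the paper's argument.
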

\begin{proof}
\begin{itemize}
\item[1.] We use the Taylor resolution of $R/I$. Suppose that $\mathcal{S}=\{S_{\ell_1}, \dots , S_{\ell_i}\}$ is an induced matching or a self semi-induced matching in $\mathcal{H}$ of type $(i,j)$. Then  by Remarks \ref{remarks2.2}(1), $\overline{e_{\ell_1, \dots , \ell_i}}\in (\mathrm{Ker}\overline{\partial}_i )_j\setminus (\mathrm{Im}\overline{\partial}_{i+1})_j$. So, $\overline{e_{\ell_1, \dots ,\ell_i}}+ \mathrm{Im}\overline{\partial}_{i+1}$ is a non-zero element in $(\mathrm{Ker}\overline{\partial}_i / \mathrm{Im}\overline{\partial}_{i+1})_j$. Hence, $\beta_{i,j}(R/I)\neq 0$ as required.
\item[2.] Suppose that $\mathcal{S}=\{S_{\ell_1}, \dots , S_{\ell_i}\}$ is a self ordered set of edges in $\mathcal{H}$ of type $(i,j)$.  We use the Lyubeznik resolution of $R/I$ with ordering
$S_{\ell_1}, \dots, S_{\ell_i}, \mathcal{E}(\mathcal{H})\setminus{\mathcal{S}}$ on edges of $\mathcal{H}$. Then  by Parts 2 and 4 of Remarks \ref{remarks2.2}, $\overline{e_{\ell_1, \dots ,\ell_i}}\in (\mathrm{Ker}\overline{\sigma}_i)_j \setminus (\mathrm{Im}\overline{\sigma}_{i+1})_j$. So, $\overline{e_{\ell_1, \dots ,\ell_i}}+ \mathrm{Im}\overline{\sigma}_{i+1}$ is a non-zero element in $(\mathrm{Ker}\overline{\sigma}_i / \mathrm{Im}\overline{\sigma}_{i+1})_j$. Hence, $\beta_{i,j}(R/I)\neq 0$ as required.
\item[3.] immediately follows from Parts 1 and 2.
\end{itemize}
\end{proof}

\begin{example}
\begin{itemize}
\item[1.] Let $G$ be a fan graph with $V(G)=\{z,x_1, x_2, \dots, x_n\}$ and $E(G)=\bigcup_{i=1}^n\{zx_i\} \cup \bigcup_{i=1}^{n-1}\{x_i x_{i+1}\}$. One can easily check that
$$\{zx_i \ | \ 1\leq i \leq n\},$$ is a self ordered set of edges in $G$ of type $(n,n+1)$ with any order on the edges. So in view of Part 3 of Theorem \ref{T3.7}, we have
$\mathrm{pd}(K[z,x_1, \dots , x_n]/I(G))\geq n$.

\item[2.] Let $\mathcal{H}$ be a simple hypergraph with $m$ edges in which every edge has a free vertex (that is for each edge $S$ of $\mathcal{H}$, there is a vertex just belonging to $S$). Then clearly $\mathcal{E}(\mathcal{H})$  is the maximal self-contained semi-induced matching and self semi-induced matching of $\mathcal{H}$.  So $b_\mathcal{H}=e_\mathcal{H}=m$.  Hence, in view of Part 3 of Theorem \ref{T3.7} and Part 3 of Corollary \ref{C3.7}, $\mathrm{pd}(R/I(\mathcal{H}))=m$. In particular, when $G$ is disjoint union of star graphs with $m$ edges, then $\mathrm{pd}(R/I(G))=m$. (Of course this also can be immediately followed from th Taylor resolution, because free vertices make it minimal.)
\end{itemize}
\end{example}

In the light of Part 3 of Proposition \ref{proposition1.3}, the following theorem is a generalization of Theorem 3.1 in \cite{Kimura} and introduces some combinatorial lower bounds for $\mathrm{pd}( R/I(\mathcal{H}))$ and $\mathrm{reg}( R/I(\mathcal{H}))$.

\begin{theorem}(Compare \cite[Theorem 3.1]{Kimura}.)\label{1.9}
Assume that there exists a self semi-disjoint set of edges in $\mathcal{H}$ of type $(i,j)$. Then
$\beta_{i,j}(R/I(\mathcal{H}))\neq 0$. In particular
$$\mathrm{pd}( R/I(\mathcal{H}))\geq d_{2,\mathcal{H}},$$
 and
$$\mathrm{reg}( R/I(\mathcal{H}))\geq d'_{2,\mathcal{H}}.$$
\end{theorem}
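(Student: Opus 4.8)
The plan is to produce, for a self semi-disjoint set $\mathcal{S}=\{S_{\ell_1},\dots,S_{\ell_i}\}$ of type $(i,j)$, an explicit nonzero homology class in degree $j$ of a suitably ordered Lyubeznik complex, which (by the Lyubeznik analogue of the computation (\ref{equation3})) forces $\beta_{i,j}(R/I(\mathcal{H}))\neq 0$. Let $\mathcal{S}_0=\{S_{k_1},\dots,S_{k_t}\}$ be the semi-induced matching guaranteed by Definitions \ref{1.2}(6), and I would fix the ordering $(\mathcal{S}\setminus\mathcal{S}_0),\mathcal{S}_0,(\mathcal{E}(\mathcal{H})\setminus\mathcal{S})$ on the edges, so that the flower edges come first, the matching edges next, and all remaining edges last. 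Writing $e$ for the symbol associated to $\mathcal{S}$, the candidate class is $\overline{e}+\mathrm{Im}\overline{\sigma}_{i+1}$.

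First I would record that $\overline{e}\in\mathrm{Ker}\overline{\sigma}_i$: by Remarks \ref{remarks2.2}(2) this is exactly Condition (i) of Definitions \ref{1.2}(6), namely $S_{\ell_k}\nsubseteq\bigcup_{t\neq k}S_{\ell_t}$ for all $k$; moreover Remarks \ref{remarks2.2}(6) guarantees that $e$ is an L-admissible symbol for the chosen ordering, so $\overline{e}$ is a genuine basis element of the Lyubeznik module sitting in degree $|\bigcup_\ell S_{\ell}|=j$.

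The heart of the argument is to show $\overline{e}\notin\mathrm{Im}\overline{\sigma}_{i+1}$. Since $\mathrm{Im}\overline{\sigma}_{i+1}$ is spanned by the boundaries of the L-admissible $(i+1)$-symbols, it suffices to prove that the coefficient of $\overline{e}$ in $\overline{\sigma}_{i+1}(e')$ vanishes for every such $e'$. By the surviving-terms formula (compare (\ref{equation4})), $\overline{e}$ can occur in $\overline{\sigma}_{i+1}(e')$ only when $e'$ is obtained from $e$ by inserting one further edge $S\in\mathcal{E}(\mathcal{H})\setminus\mathcal{S}$ with $S\subseteq\bigcup_\ell S_{\ell}$, and I would argue no such $e'$ is L-admissible. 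Since $\mathcal{S}_0$ is semi-induced we have $S\nsubseteq\bigcup\mathcal{S}_0$, and because $\bigcup\mathcal{S}=\bigcup\mathcal{S}_0\cup\{\text{flower vertices}\}$, the edge $S$ must contain a flower vertex $f$. Writing $A\in\mathcal{S}\setminus\mathcal{S}_0$ for the flower edge with $A\setminus B=\{f\}$ and $B\in\mathcal{S}_0$ its matched edge (here Condition (ii), i.e. $|A\setminus B|=1$, is used crucially), one gets $A=(A\cap B)\cup\{f\}\subseteq B\cup S$. As the ordering lists $A$ strictly before $B$, the index of $A$ is smaller than that of $B$; and since $A$ lies outside the tail of $e'$ beginning at $B$ while $A\subseteq B\cup S$ is contained in the union of that tail, the symbol $e'$ violates the defining condition of L-admissibility. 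Hence no admissible $(i+1)$-symbol contributes $\overline{e}$, so $\overline{e}\notin\mathrm{Im}\overline{\sigma}_{i+1}$.

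The main obstacle is precisely this last step: unlike the self-ordered case of Theorem \ref{T3.7}(2), the symbol $e$ need not be a \emph{maximal} L-admissible symbol (one may still append edges disjoint from $\bigcup\mathcal{S}$), so Remarks \ref{remarks2.2}(2) does not apply directly and one must rule out admissible extensions \emph{by edges inside} $\bigcup\mathcal{S}$ by hand; the ordering ``flowers before stems'' together with the one-vertex-difference Condition (ii) is exactly what manufactures the L-admissibility violation. Granting this, $\overline{e}+\mathrm{Im}\overline{\sigma}_{i+1}$ is a nonzero class in $(\mathrm{Ker}\overline{\sigma}_i/\mathrm{Im}\overline{\sigma}_{i+1})_j$, whence $\beta_{i,j}(R/I(\mathcal{H}))\neq 0$. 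Finally, applying this to a self semi-disjoint set realizing $d_{2,\mathcal{H}}$ (respectively maximizing $j-i=d'_{2,\mathcal{H}}$) yields a nonzero $\beta_{i,j}$ with $i=d_{2,\mathcal{H}}$ (respectively $j-i=d'_{2,\mathcal{H}}$), giving $\mathrm{pd}(R/I(\mathcal{H}))\geq d_{2,\mathcal{H}}$ and $\mathrm{reg}(R/I(\mathcal{H}))\geq d'_{2,\mathcal{H}}$ directly from the definitions of projective dimension and regularity.
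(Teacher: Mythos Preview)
Your proposal is correct and follows essentially the same route as the paper: produce a nonzero class in a Lyubeznik complex by showing that the symbol associated to $\mathcal{S}$ lies in $\mathrm{Ker}\overline{\sigma}_i\setminus\mathrm{Im}\overline{\sigma}_{i+1}$, with the key combinatorial step being that any attempted extension by an edge $S\subseteq\bigcup\mathcal{S}$ forces some flower edge $A$ to satisfy $A\subseteq B\cup S$ (via the one-vertex-difference condition), violating L-admissibility.

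There are two cosmetic differences worth noting. First, the paper uses the ordering $(\mathcal{S}\setminus\mathcal{S}_0),(\mathcal{E}(\mathcal{H})\setminus\mathcal{S}),\mathcal{S}_0$ rather than your $(\mathcal{S}\setminus\mathcal{S}_0),\mathcal{S}_0,(\mathcal{E}(\mathcal{H})\setminus\mathcal{S})$; both are covered by Remarks~\ref{remarks2.2}(6), and the L-admissibility violation simply occurs at a different position (at $E$ in the paper, at $B$ in yours). Second, and more substantively, the paper first passes to the induced subhypergraph on $\bigcup_{S\in\mathcal{S}}S$ via Remarks~\ref{1.3}(2), which forces \emph{every} edge to lie inside $\bigcup\mathcal{S}$ and hence makes the symbol genuinely \emph{maximal} L-admissible, so Remarks~\ref{remarks2.2}(2) applies directly. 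You instead stay in the full hypergraph, correctly observe that maximality may fail (edges disjoint from $\bigcup\mathcal{S}$ can be appended), and argue coefficient-wise that no L-admissible $(i+1)$-symbol can contribute $\overline{e}$ to its boundary. Your version is slightly more direct in that it avoids the reduction step, while the paper's version lets one quote the clean ``maximal $\Rightarrow$ not in image'' statement; the underlying combinatorics is identical.
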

\begin{proof}
In the light of Part 2 of Remarks \ref{1.3}, we may assume that $\mathcal{S}$ is a self semi-disjoint set in $\mathcal{H}$ of type $(i,j)$ and $V(\mathcal{H})=\bigcup_{S\in \mathcal{S}}S$. Then in view of Condition (ii) of Definitions \ref{1.2}(6), one can take a semi-induced matching $\mathcal{S}_0$ contained in  $\mathcal{S}$ with the desired condition. We put the ordering $(\mathcal{S}\setminus \mathcal{S}_0), (\mathcal{E}(\mathcal{H})\setminus \mathcal{S}), \mathcal{S}_0$ 
on the edges of $\mathcal{H}$. Consider the symbol $\sigma$   associated to $(\mathcal{S}\setminus \mathcal{S}_0), \mathcal{S}_0$. By Part 6 of Remarks \ref{remarks2.2}, $\sigma$ is an L-admissible symbol. Now, by means of Part 2 of Remarks \ref{remarks2.2} and Condition (i) in Definitions \ref{1.2}(6), it is enough to prove that $\sigma$ is a maximal L-admissible symbol. Suppose in contrary that there exists an edge $E\in \mathcal{E}(\mathcal{H})\setminus \mathcal{S}$ such that the symbol  $\tau$ associated to  
$(\mathcal{S}\setminus \mathcal{S}_0), E, \mathcal{S}_0$ 
is also L-admissible. Then since $\mathcal{S}_0$ is a semi-induced matching, $E\nsubseteq \bigcup_{S\in \mathcal{S}_0} S$. Hence there exists $x\in E\setminus \bigcup_{S\in \mathcal{S}_0}S$. Therefore, since $E\subseteq \bigcup _{S\in \mathcal{S}}S$, there exists $S\in \mathcal{S}\setminus \mathcal{S}_0$ such that $x\in S\setminus \bigcup _{S\in \mathcal{S}_0}S$. Now, in view of Condition (ii) of Definitions \ref{1.2}(6), there exists $S_0\in \mathcal{S}_0$ such that $|S\setminus S_0|=1$. Hence $S\setminus S_0=\{x\}$. This shows that $S\subseteq (E\cup S_0)$, since $x\in E$. Therefore $\tau$ can not be L-admissible. So $\sigma$ is a maximal L-admissible symbol as desired. The last assertion immediately follows from the first statement.
\end{proof}

\section{Some characterization for projective dimension and regularity of edge ideal of triangulated hypergraphs}
As we promised in the introduction, in this section we are willing to concentrate on a special class of triangulated hypergraphs and to characterize algebraic invariants of their edge ideals. Note that the concept of triangulated hypergraphs is a natural generalization of the concept of chordal graphs which firstly introduced in \cite{Ha+Vantuyl}.
Hereafter we assume that $\mathcal{H}$ is a $d$-uniform hypergraph such that for every two distinct edges $S$ and $S'$ which has non-empty intersection, we have $|S\cap S'|=d-1$.  For our next main result we need to recall some definitions from \cite{Ha+Vantuyl}.

\begin{definitions} (Compare \cite[Definitions 3.1, 4.6, 5.3, 5.4 and 5.5]{Ha+Vantuyl}.)
\begin{itemize}
\item[1.] An edge $S$ in $\mathcal{H}$ is called a \textbf{splitting edge} of $\mathcal{H}$ if $I(\mathcal{H})=\langle x^S \rangle +I(\mathcal{H}\setminus S)$ is a splitting of $I(\mathcal{H})$. Recall that $I=U+V$ is called a \textbf{splitting of the monomial ideal} $I$ if $U$ and $V$ are two monomial ideals such that $G(I)$ is the disjoint union of $G(U)$ and $G(V)$ and there is a function
\begin{align*}
G(U\cap V) &\rightarrow G(U)\times G(V)\\
w &\mapsto (\phi (w), \psi(w))
\end{align*}
satisfying the following properties:
\begin{itemize}
\item[$\bullet$] $w=\mathrm{lcm}(\phi (w), \psi(w))$ for all $w\in G(U\cap V)$;
\item[$\bullet$] for every subset $G'\subseteq G(U\cap V)$, both $\mathrm{lcm}\phi(G')$ and $\mathrm{lcm}\psi(G')$ strictly divide $\mathrm{lcm}G'$.
\end{itemize}
(see \cite{E+K}).
\item[2.] Let $S$ be an edge of $\mathcal{H}$. Then we set
$$N(S)=\bigcup_{E\in \mathcal{E}(\mathcal{H}), E\cap S\neq \emptyset}E\setminus S.$$
\item[3.] For each vertex $x$ in $\mathcal{H}$ we set
$$N(x)= \{y\in V(\mathcal{H}) \ | \ \textrm{ there exists } S\in \mathcal{E}(\mathcal{H}) \textrm{ with } \{x,y\}\subseteq S\},$$
and
$$N[x]=\{x\}\cup N(x).$$
\item[4.] A vertex $x$ in a $d$-uniform hypergraph $\mathcal{H}$ is called a \textbf{simplicial vertex} if every $d$-subset of $N[x]$ is an edge of $\mathcal{H}$.
\item[5.] $\mathcal{H}$ is called \textbf{triangulated} if every induced subhypergraph of $\mathcal{H}$ has a simplicial vertex.
\end{itemize}
\end{definitions}

\begin{remarks}\label{remarks1.10}
\begin{itemize}
\item[1.] Suppose that $x$ is a simplicial vertex and $S$ is an edge of $\mathcal{H}$ containing $x$. Set $N(S)=\{z_1, \dots, z_t\}$, where $z_1, \dots , z_t$ are pairwise distinct. Then there exist distinct edges  $S_1, \dots , S_t$ of $\mathcal{H}$ such that $S_\ell\setminus S=\{z_\ell\}$ and $x\notin S_\ell$ for all $1\leq \ell \leq t$. Actually, for each $1\leq \ell \leq t$ we can choose an edge $S_\ell$ of $\mathcal{H}$ with $z_\ell \in S_\ell$. Since $z_\ell\notin S$ and $|S_\ell \cap S|=d-1$, we have $S_\ell\setminus S=\{z_\ell\}$. If $x\in S_\ell$, then there is a vertex $y_\ell\neq x$ which belongs to $S\setminus S_\ell$. Since $x$ is a simplicial vertex, $(S_\ell\setminus \{x\})\cup \{y_\ell\}$ is an edge of $\mathcal{H}$ containing $z_\ell$ which doesn't contain $x$. So, we may replace $S_\ell$ with $(S_\ell\setminus \{x\})\cup \{y_\ell\}$.
\item[2.] In view of proof of Lemma 5.7 in \cite{Ha+Vantuyl} and the assumption $|S\cap S'|=d-1$ for every two distinct edges $S$ and $S'$ of $\mathcal{H}$, if $\mathcal{H}$ is triangulated, $x$ is a simplicial vertex of $\mathcal{H}$, $S=\{x, x_2, \dots , x_d\}$ is an edge containing $x$ and $N(S)=\{z_1, \dots, z_t\}$,  then $S$ is a splitting edge and $\mathcal{H}_1$ and $\mathcal{H}_2$ are triangulated hypergraphs, where $\mathcal{H}_1=\mathcal{H}\setminus S$ and $\mathcal{H}_2$ is the induced subhypergraph on $V(\mathcal{H})\setminus \{x,x_2, \dots, x_d, z_1, \dots , z_t\}$.
\end{itemize}
\end{remarks}

The following two lemmas, which is needed for our next main result, may be valuable in turn.
\begin{lemma}\label{lemma1.9}
Assume that  $x$ is a simplicial vertex in $\mathcal{H}$ and $S$ is an edge of $\mathcal{H}$ containing $x$. Then if $\mathcal{S}$ is an induced matching in $\mathcal{H}\setminus S$, then $\mathcal{S}$ is an induced matching in $\mathcal{H}$. In particular, if $\mathcal{S}$ is a self disjoint set in $\mathcal{H}\setminus S$, then $\mathcal{S}$ is a self disjoint set in $\mathcal{H}$.
\end{lemma}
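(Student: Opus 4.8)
The plan is to notice that deleting the single edge $S$ affects only one clause in the definition of an induced matching, and to isolate exactly what remains to be checked. The matching condition $S_\ell\cap S_{\ell'}=\emptyset$ is a property of the listed edges alone and makes no reference to the ambient edge set, so it transfers verbatim from $\mathcal{H}\setminus S$ to $\mathcal{H}$. For the semi-induced condition, since $\mathcal{E}(\mathcal{H})\setminus\mathcal{S}=(\mathcal{E}(\mathcal{H}\setminus S)\setminus\mathcal{S})\cup\{S\}$ and $S\notin\mathcal{S}$, the requirement is already known for every edge other than $S$ by hypothesis. Hence the whole first assertion reduces to the single claim $S\nsubseteq\bigcup_{\ell=1}^i S_\ell$, where $\mathcal{S}=\{S_1,\dots,S_i\}$.

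First I would establish this claim by contradiction, and this is where simpliciality enters; I expect it to be the crux of the argument. Suppose $S\subseteq U:=\bigcup_\ell S_\ell$. Since $x\in S\subseteq U$, there is an index $\ell_0$ with $x\in S_{\ell_0}$, and because $S$ and $S_{\ell_0}$ are distinct $d$-sets there is a vertex $y\in S\setminus S_{\ell_0}$ (note $y\neq x$ and $y\in U$ as $y\in S$). Every vertex of $S_{\ell_0}$ is a neighbour of $x$ since they lie together in the edge $S_{\ell_0}$, and $y\in N(x)$ since $\{x,y\}\subseteq S$; thus $E:=(S_{\ell_0}\setminus\{x\})\cup\{y\}$ is a $d$-subset of $N[x]$ and therefore an edge of $\mathcal{H}$ by simpliciality of $x$. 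I would then check that $E$ contradicts the semi-induced property of $\mathcal{S}$ in $\mathcal{H}\setminus S$: we have $x\notin E$ while $x\in S$, so $E\neq S$ and hence $E\in\mathcal{E}(\mathcal{H}\setminus S)$; since $E\cap S_{\ell_0}=S_{\ell_0}\setminus\{x\}\neq\emptyset$ whereas the $S_\ell$ are pairwise disjoint, $E$ cannot equal any $S_\ell$, so $E\notin\mathcal{S}$; and $E\subseteq U$ because $S_{\ell_0}\setminus\{x\}\subseteq S_{\ell_0}\subseteq U$ and $y\in U$. This contradicts $\mathcal{S}$ being a semi-induced matching in $\mathcal{H}\setminus S$, proving $S\nsubseteq U$, and with the first paragraph this yields that $\mathcal{S}$ is an induced matching in $\mathcal{H}$.

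For the \emph{in particular} statement I would argue that every ingredient of a self disjoint set apart from its inner induced matching is intrinsic to the listed edges and so survives unchanged: condition (i) of Definitions \ref{1.2}(6), namely $S_k\nsubseteq\bigcup_{\ell\neq k}S_\ell$, and the equalities $|S_\ell\setminus S_{k_{\ell'}}|=1$ from condition (ii) involve only $\mathcal{S}$ and $\mathcal{S}_0$, not the ambient hypergraph. The sole clause referring to $\mathcal{H}$ is that the chosen $\mathcal{S}_0\subseteq\mathcal{S}$ be an induced matching; since $\mathcal{S}_0$ is an induced matching in $\mathcal{H}\setminus S$, applying the first part of the lemma to $\mathcal{S}_0$ shows it is an induced matching in $\mathcal{H}$. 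Thus the very same $\mathcal{S}_0$ witnesses that $\mathcal{S}$ is a self disjoint set in $\mathcal{H}$.

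In summary, the hard part is the edge construction $E=(S_{\ell_0}\setminus\{x\})\cup\{y\}$, which uses both that $x$ is simplicial and that $x\in S$; once $S\nsubseteq\bigcup_\ell S_\ell$ is secured, everything else is bookkeeping, resting on the observation that disjointness and the various cardinality conditions are features of the edges alone and are untouched by the deletion of $S$.
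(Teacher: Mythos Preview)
Your proof is correct and follows essentially the same strategy as the paper: reduce to showing $S\nsubseteq\bigcup_\ell S_\ell$, assume the contrary, and use simpliciality of $x$ to build an auxiliary edge of $\mathcal{H}\setminus S$ lying inside $\bigcup_\ell S_\ell$ that violates the induced-matching hypothesis. The only cosmetic difference is a symmetric swap in that construction---the paper chooses $y\in S_1\setminus S$ and forms $(S\setminus\{x\})\cup\{y\}$ (forcing it to be some $S_2$ and then contradicting disjointness), whereas you choose $y\in S\setminus S_{\ell_0}$ and form $(S_{\ell_0}\setminus\{x\})\cup\{y\}$ (showing it lies outside $\mathcal{S}$ and then contradicting the semi-induced condition); both routes are equally valid.
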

\begin{proof}
Set $\mathcal{S}=\{S_1, \dots , S_i\}$. In order to establish the first assertion, it is enough to prove that $S\nsubseteq \bigcup _{\ell=1}^iS_\ell$. Suppose in contrary that $S\subseteq \bigcup_{\ell=1}^i S_{\ell}$. Then we may assume that $x\in S_1$. Since $S_1\neq S$,  there exists a vertex $y\in S_1\setminus S$. Since $x$ is a simplicial vertex, $y\in N(x)$ and $S\setminus \{x\}\subseteq N(x)$, we have  $(S\setminus \{x\})\cup \{y\}\in \mathcal{E}(\mathcal{H}\setminus S)$ and $(S\setminus \{x\})\cup \{y\}\subseteq \bigcup_{\ell=1}^iS_{\ell}$. Therefore since $\mathcal{S}$ is an induced matching in $\mathcal{H}\setminus S$, we may assume that $S_2=(S\setminus \{x\})\cup \{y\}$. But now  we have $y\in S_1\cap S_2$ which is a contradiction with the fact that $\mathcal{S}$ is a matching. The last assertion immediately follows from the first statement.
\end{proof}

\begin{lemma}\label{lemma1.10}
With the notation as in Remarks \ref{remarks1.10}, if $\mathcal{S}'$ is a self disjoint set in $\mathcal{H}_2$ of type $(i-1-t, j-d-t)$, then $\mathcal{S}=\mathcal{S}'\cup \{S, S_1, \dots , S_t\}$ is a self disjoint set in $\mathcal{H}$ of type $(i,j)$.
\end{lemma}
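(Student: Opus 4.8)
The plan is to verify directly that $\mathcal{S}=\mathcal{S}'\cup\{S,S_1,\dots,S_t\}$ satisfies the two defining conditions of a self disjoint set in Definitions \ref{1.2}(6) and has the asserted type. Everything rests on one structural observation about the $S_\ell$: since $\mathcal{H}$ is $d$-uniform, $|S\cap S_\ell|=d-1$, and $x\notin S_\ell$, the intersection $S\cap S_\ell$ must be all of $S\setminus\{x\}=\{x_2,\dots,x_d\}$, so that $S_\ell=\{x_2,\dots,x_d,z_\ell\}$. Consequently $S\cup\bigcup_{\ell=1}^t S_\ell=\{x,x_2,\dots,x_d,z_1,\dots,z_t\}$, which is precisely the set of $d+t$ vertices deleted to form $\mathcal{H}_2$; in particular it is disjoint from $\bigcup_{E\in\mathcal{S}'}E\subseteq V(\mathcal{H}_2)$.

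First I would settle the type. The set $\mathcal{S}$ has $(i-1-t)+1+t=i$ edges, and they are pairwise distinct because the edges of $\mathcal{S}'$ are contained in $V(\mathcal{H}_2)$ while $S,S_1,\dots,S_t$ live on the complementary $d+t$ vertices. By the displayed disjointness, $|\bigcup_{E\in\mathcal{S}}E|=(j-d-t)+(d+t)=j$, giving type $(i,j)$. Condition (i) of Definitions \ref{1.2}(6) is then checked edge by edge using private vertices: the vertex $x$ lies in $S$ but in none of the other edges; the vertex $z_k$ lies in $S_k$ but in no $S_\ell$ with $\ell\neq k$, not in $S$, and not in $V(\mathcal{H}_2)$; and any edge $E\in\mathcal{S}'$ retains a private vertex because, $E$ being contained in $V(\mathcal{H}_2)$, its intersection with $\bigcup_{E'\in\mathcal{S},E'\neq E}E'$ equals its intersection with $\bigcup_{E'\in\mathcal{S}',E'\neq E}E'$, and the latter fails to cover $E$ since $\mathcal{S}'$ is self disjoint.

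The substance is Condition (ii), and I expect the induced-matching verification to be the main obstacle. Let $\mathcal{S}_0'\subseteq\mathcal{S}'$ be the induced matching witnessing that $\mathcal{S}'$ is self disjoint in $\mathcal{H}_2$, and take $\mathcal{S}_0=\mathcal{S}_0'\cup\{S\}$. To see that $\mathcal{S}_0$ is an induced matching in $\mathcal{H}$, note first that since $\mathcal{H}_2$ is the induced subhypergraph on $V(\mathcal{H}_2)$, any edge of $\mathcal{H}$ contained in $\bigcup_{F\in\mathcal{S}_0'}F\subseteq V(\mathcal{H}_2)$ is already an edge of $\mathcal{H}_2$, so $\mathcal{S}_0'$ is an induced matching not merely in $\mathcal{H}_2$ but in $\mathcal{H}$; as $S$ is disjoint from every $F\in\mathcal{S}_0'$, the family $\mathcal{S}_0$ is at least a matching. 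For the semi-induced condition, suppose $E\in\mathcal{E}(\mathcal{H})$, $E\neq S$, satisfies $E\subseteq S\cup\bigcup_{F\in\mathcal{S}_0'}F$. If $E\cap S=\emptyset$, then $E\subseteq V(\mathcal{H}_2)$ forces $E\in\mathcal{S}_0'$; otherwise $E$ meets $S$, whence $|E\cap S|=d-1$ and the single vertex of $E\setminus S$ lies in $N(S)=\{z_1,\dots,z_t\}$, yet it must also lie in $\bigcup_{F\in\mathcal{S}_0'}F\subseteq V(\mathcal{H}_2)$, contradicting $\{z_1,\dots,z_t\}\cap V(\mathcal{H}_2)=\emptyset$. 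Thus $\mathcal{S}_0$ is an induced matching. Finally, each edge of $\mathcal{S}\setminus\mathcal{S}_0$ differs from a member of $\mathcal{S}_0$ in exactly one vertex: each $S_k$ satisfies $|S_k\setminus S|=1$ with $S\in\mathcal{S}_0$, and each edge of $\mathcal{S}'\setminus\mathcal{S}_0'$ inherits such a partner in $\mathcal{S}_0'\subseteq\mathcal{S}_0$ from the self disjointness of $\mathcal{S}'$. This establishes Condition (ii) and completes the verification.
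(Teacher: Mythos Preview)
Your proof is correct and follows essentially the same route as the paper's: you verify Conditions (i) and (ii) of Definitions \ref{1.2}(6) directly, taking $\mathcal{S}_0=\mathcal{S}_0'\cup\{S\}$ as the witnessing induced matching and using the same contradiction via $N(S)\cap V(\mathcal{H}_2)=\emptyset$. Your explicit identification $S_\ell=\{x_2,\dots,x_d,z_\ell\}$ and your separate treatment of the case $E\cap S=\emptyset$ make a couple of steps more transparent than in the paper, but the argument is the same.
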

\begin{proof}
We first prove that $\mathcal{S}$ satisfies Condition (i) in Definitions \ref{1.2}(6). Note that 
$$\bigcup_{S'\in \mathcal{S}'}S'\subseteq V(\mathcal{H})\setminus \{x,x_2, \dots, x_d, z_1, \dots , z_t\}=V(\mathcal{H})\setminus (S\cup (\bigcup_{k=1}^tS_k)).$$
Also note that $x\in S \setminus (\bigcup_{k=1}^tS_k)$ and $z_k\in S_k\setminus (S\cup (\bigcup_{k'\neq k}S_{k'}))$. Then assertion follows by the fact that $\mathcal{S}'$ is a self disjoint set in $\mathcal{H}_2$.

Next we prove that $\mathcal{S}$ satisfies Condition (ii) in Definitions \ref{1.2}(6). Since $\mathcal{S}'$ is a self disjoint set in $\mathcal{H}_2$, there exists an induced matching $\mathcal{S}'_0 \subseteq \mathcal{S}'$ with the property mentioned in Condition (ii) of Definitions \ref{1.2}(6). We will show that
$\mathcal{S}_0=\mathcal{S}'_0 \cup \{S\}$ is a desired induced matching. We first show $\mathcal{S}_0$ is an induced matching. It is clear that $\mathcal{S}_0$ is a matching. Also suppose in contrary that there exists an edge $E$ of $\mathcal{H}$ which is contained in $S\cup (\bigcup _{S'\in \mathcal{S}'_0}S')$ and $E\neq S$ and $E\notin \mathcal{S}'_0$. Since $\mathcal{S}'_0$ is an induced matching, $E\cap S\neq \emptyset$. Hence there is a vertex $z$ such that $E\setminus S=\{z\}$ because $|E\cap S|=d-1$. Since $E\subseteq S\cup (\bigcup_{S'\in \mathcal{S}'_0}S')$, we have $z\in \bigcup _{S'\in \mathcal{S}'_0}S'$. Although $E\setminus S=\{z\}$ implies $z\in N(S)=\{z_1, \dots , z_t\}$, it contradicts to $z\in \bigcup _{S'\in \mathcal{S}'_0}S'\subseteq V(\mathcal{H})\setminus \{x,x_2, \dots, x_d, z_1, \dots , z_t\}$. Thus $\mathcal{S}_0$ is an induced matching. Other properties follows from the fact that $\mathcal{S}'$ is a self disjoint set in $\mathcal{H}_2$ and for all $1\leq k \leq t$, $S_k \setminus S=\{z_k\}$.

So $\mathcal{S}$ is a self disjoint set in $\mathcal{H}$. It is clear that it is of type $(i,j)$ as desired.
\end{proof}

Now, we are ready to establish our main result of this section which is a generalization of Theorem 4.1 in \cite{Kimura}.
\begin{theorem} \label{1.11}
Assume that $\mathcal{H}$ is a $d$-uniform triangulated  hypergraph such that for every distinct non-disjoint edges $S$ and $S'$, we have $|S\cap S'| = d-1$.
\begin{itemize}
\item[1.] $\beta_{i,j}(R/I(\mathcal{H}))\neq 0$ if and only if $\mathcal{H}$ contains a self disjoint set of type $(i,j)$.
\item[2.] $\mathrm{pd}(R/I(\mathcal{H}))=d_{1,\mathcal{H}}=d_{2,\mathcal{H}}$.
\item[3.] $\mathrm{reg}(R/I(\mathcal{H}))=d'_{1,\mathcal{H}}=d'_{2,\mathcal{H}}$.
\end{itemize}
\end{theorem}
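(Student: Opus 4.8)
The plan is to prove Part 1 first, since Parts 2 and 3 will then follow almost formally. The backward implication of Part 1 is immediate and does not even use the triangulated hypothesis: if $\mathcal{H}$ contains a self disjoint set of type $(i,j)$, then by Remarks \ref{remarks1.3}(1) it is in particular a self semi-disjoint set of type $(i,j)$, so Theorem \ref{1.9} gives $\beta_{i,j}(R/I(\mathcal{H}))\neq 0$. The whole weight of the theorem therefore lies in the forward implication of Part 1, which I would prove by induction on the number of edges of $\mathcal{H}$; the case of no edges is trivial, since then $\beta_{i,j}(R/I(\mathcal{H}))\neq 0$ only for $(i,j)=(0,0)$, which is realized by the empty family.

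For the inductive step, since $\mathcal{H}$ is triangulated it has a simplicial vertex $x$; I fix an edge $S$ containing $x$ and set $N(S)=\{z_1,\dots,z_t\}$. By Remarks \ref{remarks1.10}, $S$ is a splitting edge, the hypergraphs $\mathcal{H}_1=\mathcal{H}\setminus S$ and $\mathcal{H}_2$ are again $d$-uniform, triangulated, satisfy the standing intersection hypothesis and have fewer edges, and there are distinct edges $S_1,\dots,S_t$ with $S_\ell\setminus S=\{z_\ell\}$ and $x\notin S_\ell$. Writing $U=\langle x^S\rangle$ and $V=I(\mathcal{H}_1)$, the Eliahou--Kervaire splitting formula (see \cite{E+K}) gives
\[
\beta_{i,j}(R/I(\mathcal{H}))=\beta_{i,j}(R/U)+\beta_{i,j}(R/I(\mathcal{H}_1))+\beta_{i-1,j}(R/(U\cap V)).
\]
If the left side is nonzero, one of the three summands is nonzero. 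The first is nonzero only for $(i,j)=(1,d)$, where the single edge $\{S\}$ is itself a self disjoint set of type $(1,d)$. If the middle summand is nonzero, induction yields a self disjoint set of type $(i,j)$ in $\mathcal{H}_1$, which by Lemma \ref{lemma1.9} is a self disjoint set of type $(i,j)$ in $\mathcal{H}$.

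The third summand is the main obstacle. Here I would first identify $U\cap V=\langle x^S\rangle\cap I(\mathcal{H}_1)$ explicitly: using $|S\cap S'|=d-1$ for non-disjoint edges and the edges $S_\ell$ of Remarks \ref{remarks1.10}(1), its minimal generators are $x^{S\cup\{z_\ell\}}=x^S x_{z_\ell}$ for $1\le\ell\le t$ together with $x^S x^E$ for $E\in\mathcal{E}(\mathcal{H}_2)$, so that $U\cap V=x^S\cdot J$ with $J=\langle x_{z_1},\dots,x_{z_t}\rangle+I(\mathcal{H}_2)$. Since multiplication by $x^S$ is an isomorphism onto its image raising degrees by $d$, this gives $\beta_{i-1,j}(R/(U\cap V))=\beta_{i-1,j-d}(R/J)$ (only $i\ge 2$ can occur, as $j\ge d>0$); and because the variables $x_{z_\ell}$ are disjoint from those in $I(\mathcal{H}_2)$, the minimal free resolution of $R/J$ is the tensor product of a Koszul complex with a minimal resolution of $R/I(\mathcal{H}_2)$, whence $\beta_{i-1,j-d}(R/J)=\sum_{a=0}^{t}\binom{t}{a}\beta_{i-1-a,\,j-d-a}(R/I(\mathcal{H}_2))$. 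Nonvanishing of this sum forces $\beta_{i-1-a,\,j-d-a}(R/I(\mathcal{H}_2))\neq 0$ for some $a$, so by induction $\mathcal{H}_2$ has a self disjoint set $\mathcal{S}'$ of type $(i-1-a,\,j-d-a)$. Adjoining $S$ together with any $a$ of the stems $S_\ell$ then produces a self disjoint set of type $(i,j)$ in $\mathcal{H}$: this is precisely Lemma \ref{lemma1.10} when $a=t$, and the identical argument (each adjoined stem meets $S$ in a single new vertex, while $\mathcal{S}'$ lives on vertices disjoint from $S\cup N(S)$) handles every $0\le a\le t$.

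Finally, Parts 2 and 3 drop out of Part 1. Indeed $\mathrm{pd}(R/I(\mathcal{H}))=\max\{i:\beta_{i,j}\neq 0\text{ for some }j\}$ equals, by Part 1, the largest $i$ admitting a self disjoint set of type $(i,j)$, namely $d_{1,\mathcal{H}}$; combining $d_{1,\mathcal{H}}\le d_{2,\mathcal{H}}$ from Remarks \ref{remarks1.3}(2) with the bound $d_{2,\mathcal{H}}\le\mathrm{pd}(R/I(\mathcal{H}))$ of Theorem \ref{1.9} forces $d_{1,\mathcal{H}}=d_{2,\mathcal{H}}=\mathrm{pd}(R/I(\mathcal{H}))$. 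The same reasoning applied to $\max\{j-i:\beta_{i,j}\neq 0\}$, together with $d'_{1,\mathcal{H}}\le d'_{2,\mathcal{H}}\le\mathrm{reg}(R/I(\mathcal{H}))$, yields Part 3. The principal difficulty of the whole argument is the third case above: pinning down $U\cap V$ in terms of $\mathcal{H}_2$ and the flowers $z_\ell$, and verifying that the combinatorial lifting of Lemma \ref{lemma1.10} survives when only a proper subset of the stems is used.
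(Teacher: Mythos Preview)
Your proof is correct and follows the same overall strategy as the paper: induction on the number of edges, using a simplicial vertex and the associated splitting edge together with the Eliahou--Kervaire recursion, then lifting self disjoint sets from $\mathcal{H}_1$ and $\mathcal{H}_2$ via Lemmas~\ref{lemma1.9} and~\ref{lemma1.10}. There are, however, two differences in execution worth noting.

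First, the paper does not compute $U\cap V$ by hand as you do; it simply quotes \cite[Theorem 5.8]{Ha+Vantuyl} for the recursion
\[
\beta_{i,j}(R/I(\mathcal{H}))=\beta_{i,j}(R/I(\mathcal{H}_1))+\sum_{\ell=0}^{i-1}\binom{t}{\ell}\beta_{i-1-\ell,\,j-d-\ell}(R/I(\mathcal{H}_2)),
\]
which is exactly what your identification $U\cap V=x^S\bigl(\langle x_{z_1},\dots,x_{z_t}\rangle+I(\mathcal{H}_2)\bigr)$ and the Koszul tensor decomposition produce.

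Second, and more interestingly, the paper begins the forward implication by invoking Remarks~\ref{1.3}(1) to reduce to the case $|V(\mathcal{H})|=j$. Since then $|V(\mathcal{H}_2)|=j-d-t$, Remarks~\ref{1.3}(3) forces every summand $\beta_{i-1-a,\,j-d-a}(R/I(\mathcal{H}_2))$ with $a<t$ to vanish, so only the term $a=t$ survives and Lemma~\ref{lemma1.10} applies verbatim. You skip this reduction and must therefore allow all $0\le a\le t$; the extension of Lemma~\ref{lemma1.10} you sketch (adjoin $S$ together with only $a$ of the stems $S_\ell$) is indeed routine---the proof of that lemma uses the full set of stems only to witness condition~(ii) of Definitions~\ref{1.2}(6) against $S$, and any subset of them works just as well, while the type count $(i-1-a)+1+a=i$ and $(j-d-a)+(d+a)=j$ still comes out right. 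The paper's reduction is cleaner; your route is more self-contained and avoids the external reference to \cite{Ha+Vantuyl}.
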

\begin{proof}
\begin{itemize}
\item[1.] By using Theorem \ref{1.9}, it is enough to prove the \textit{only if} part. To this end, in the light of Remarks \ref{1.3}(1), we may assume that $|V(\mathcal{H})|=j$. We use induction on $|\mathcal{E}(\mathcal{H})|=m$. If $m=0$, then $I(\mathcal{H})=0$ and hence the only non-zero graded Betti number is $\beta_{0,0}$. So  $\mathcal{S}=\emptyset$ is the desired self disjoint set. If $m=1$, then $I(\mathcal{H})=\langle x^S \rangle$ where $S$ is the only edge of $\mathcal{H}$ and so the only non-zero graded Betti numbers are $\beta_{0,0}$ and $\beta_{1,|S|}$. Since $\mathcal{S}=\emptyset$ and $\mathcal{S}=\{S\}$ are self disjoint sets in $\mathcal{H}$, we have done. Now assume inductively that the result has been proved for smaller values of $m$. Since $\mathcal{H}$ is triangulated, it has a simplicial vertex, say $x$. Let $S=\{x, x_2, \dots , x_d\}$ be an edge containing $x$ in $\mathcal{H}$. Set $N(S)=\{z_1, \dots , z_t\}$. Then in view of Remarks \ref{remarks1.10}(2), $S$ is a splitting edge and $\mathcal{H}_1$ and $\mathcal{H}_2$ are triangulated hypergraphs, where $\mathcal{H}_1=\mathcal{H}\setminus S$ and $\mathcal{H}_2$ is the induced subhypergraph on $V(\mathcal{H})\setminus \{x,x_2, \dots, x_d, z_1, \dots , z_t\}$. Moreover, by using Remarks \ref{remarks1.10}(1),  there exists $S_1, \dots , S_t\in \mathcal{E}(\mathcal{H})$ such that $S_\ell\setminus S=\{z_\ell\}$  and $x\notin S_\ell$ for all $1\leq \ell \leq t$. By means of Theorem 5.8 in \cite{Ha+Vantuyl}, we have the recursive formula
\begin{equation}\label{equation1}
\beta_{i,j}(R/I(\mathcal{H}))=\beta_{i,j}(R/I(\mathcal{H}_1) )+\sum_{\ell =0}^{i-1}
\left( \begin{array}{c}
t \\
\ell
\end{array}   \right) \beta_{i-1-\ell, j-d-\ell}(R/I(\mathcal{H}_2)).
\end{equation}
Since $|V(\mathcal{H}_2)|=j-d-t$, by Remarks \ref{1.3}(3), $ \beta_{i-1-\ell, j-d-\ell}(R/I(\mathcal{H}_2))=0$ except when $\ell\geq t$. Hence (\ref{equation1}) implies that
\begin{equation}\label{equation2}
\beta_{i,j}(R/I(\mathcal{H}))=\beta_{i,j}(R/I(\mathcal{H}_1) )+\beta_{i-1-t, j-d-t}(R/I(\mathcal{H}_2)).
\end{equation}
Now, since $\beta_{i,j}(R/I(\mathcal{H}))\neq 0$, at least one of the summands in the right side of (\ref{equation2}) is non-zero. Thus there are two cases. We solve the problem in each case as follows.
\begin{itemize}

\item[Case 1.] $\beta_{i,j}(R/I(\mathcal{H}_1))\neq 0$.  So, by inductive hypothesis $\mathcal{H}_1$ has a self disjoint set $\mathcal{S}$ of type $(i,j)$.  Now the result follows from Lemma \ref{lemma1.9} in this case.

\item[Case 2.] $\beta_{i-1-t, j-d-t}(R/I(\mathcal{H}_2))\neq 0$.
So by inductive hypothesis, there is a self disjoint set $\mathcal{S}'=\{S_{\ell_1}, \dots , S_{\ell_{i-1-t}}\}$ in $\mathcal{H}_2$ of type $(i-1-t,j-d-t)$. By means of Lemma \ref{lemma1.10}, $\mathcal{S}=\{S_{\ell_1}, \dots , S_{\ell_{i-1-t}}, S, S_1, \dots , S_t\}$ is a self disjoint set in $\mathcal{H}$ of type $(i,j)$ as desired.
\end{itemize}
 The above cases  complete the proof.

\item[2, 3.] can be implied from Part 1, Theorem \ref{1.9} and Remarks \ref{remarks1.3}(2).
\end{itemize}
\end{proof}

As an immediate consequence of Theorem \ref{1.11} and Proposition \ref{proposition1.3}(3), we regain some results in \cite{Kimura} and \cite{Zheng} as follows.
\begin{corollary} (See \cite[Theorem 4.1]{Kimura} and  \cite{Zheng}.)
Assume that $G$ is a chordal graph.
Then
$$\mathrm{pd}(R/I(G))=d_G,$$
and
$$\mathrm{reg}(R/I(G))=d'_G.$$
\end{corollary}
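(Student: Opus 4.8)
The plan is to recognize a chordal graph as a special case of the hypergraph class governed by Theorem \ref{1.11}, and then to read off the conclusion by combining that theorem with Proposition \ref{proposition1.3}(3). First I would verify the hypotheses of Theorem \ref{1.11} for $G$ in the case $d=2$. A simple graph is automatically a $2$-uniform hypergraph, and any two distinct edges of a simple graph meet in at most one vertex; hence two distinct non-disjoint edges $S$ and $S'$ satisfy $|S\cap S'|=1=d-1$, exactly as required. The only point needing care is the identification of the \emph{triangulated} hypergraph condition with \emph{chordality}. Unwinding the definition of a simplicial vertex in the case $d=2$, a vertex $x$ is simplicial precisely when every $2$-subset of $N[x]$ is an edge, that is, when the closed neighborhood $N[x]$ is a clique; thus $G$ is triangulated exactly when every induced subgraph has a vertex whose closed neighborhood is a clique, which is the classical characterization of a chordal graph. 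So $G$ satisfies all the hypotheses of Theorem \ref{1.11} with $d=2$.

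With the hypotheses in place, I would invoke Theorem \ref{1.11}(2) and (3) directly to obtain
\[
\mathrm{pd}(R/I(G))=d_{1,G}=d_{2,G}, \qquad \mathrm{reg}(R/I(G))=d'_{1,G}=d'_{2,G}.
\]
It then remains only to translate the self-disjoint-set invariants into the bouquet invariants $d_G$ and $d'_G$, and this is supplied verbatim by Proposition \ref{proposition1.3}(3), which gives $d_{1,G}=d_{2,G}=d_G$ and $d'_{1,G}=d'_{2,G}=d'_G$ for every graph $G$. Chaining these equalities yields $\mathrm{pd}(R/I(G))=d_G$ and $\mathrm{reg}(R/I(G))=d'_G$, which is the assertion.

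Because this is a corollary, there is no real obstacle in the deduction itself: it is a formal concatenation of two results already established above. The single step that is not purely mechanical is the reduction confirming that the notion of a triangulated hypergraph specializes to the classical notion of a chordal graph when $d=2$; this is essentially definitional, resting on matching the simplicial-vertex definition to the clique condition on closed neighborhoods, together with the standard fact that chordal graphs are exactly those in which every induced subgraph admits such a simplicial vertex.
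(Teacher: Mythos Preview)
Your proposal is correct and follows the same route as the paper: the corollary is stated there as an immediate consequence of Theorem \ref{1.11} together with Proposition \ref{proposition1.3}(3), which is exactly the chain of implications you spell out. Your additional remark that, for $d=2$, the triangulated condition specializes to the classical simplicial-vertex characterization of chordal graphs is the only nontrivial check, and you handle it correctly.
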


The following example presents some classes of hypergraphs satisfying the assumptions of Theorem \ref{1.11}.
\begin{example}
Assume that $\mathcal{H}$ is a hypergraph on the vertex set $V(\mathcal{H})=\{x_1, \dots , x_{d+1}\}$ whose edges are all $d$-subsets of $V(\mathcal{H})$. Also assume that $\mathcal{H}'$ is a star hypergraph, i.e. $\mathcal{H}'$ is a  hypergraph with $V(\mathcal{H}')=\{z_1, \dots , z_{d-1}, x_1, \dots , x_n\}$ and
$$\mathcal{E}(\mathcal{H}')=\{\{z_1, \dots , z_{d-1}, x_i\} \ | \ 1\leq i \leq n\}.$$
Then one can easily check that $\mathcal{H}$ and $\mathcal{H}'$ are the hypergraphs satisfying the assumptions of Theorem \ref{1.11}. Also one can check that $\{\{x_1, \dots, x_d\}, \{x_2, \dots , x_{d+1}\}\}$ is a self disjoint set in $\mathcal{H}$ of the maximum size and for each self disjoint set in $\mathcal{H}$ of type $(i,j)$, $j-i=d-1$.  Moreover, $\mathcal{E}(\mathcal{H}')$  is a self disjoint set in $\mathcal{H}'$ of the maximum size and for each self disjoint set in $\mathcal{H}'$ of type $(i,j)$, $j-i=d-1$. So, Theorem \ref{1.11} shows that
$$\mathrm{pd}(R/I(\mathcal{H}))=2, \mathrm{reg}(R/I(\mathcal{H}))=d-1,$$
and
$$\mathrm{pd}(R/I(\mathcal{H}'))=n, \mathrm{reg}(R/I(\mathcal{H}'))=d-1.$$
\end{example}

We end this paper by the following remark about triangulated hypergraphs.
\begin{remark}
Note that if $\mathcal{H}$ is a $d$-uniform properly connected  triangulated  hypergraph, then in view of Theorem 6.8 in \cite{Ha+Vantuyl} and the paragraph before Remarks \ref{remarks1.3}, we have
$$\mathrm{reg}(R/I(\mathcal{H}))= (d-1)a_{\mathcal{H}}.$$
Hence, if $\mathcal{H}$ is a $d$-uniform triangulated  hypergraph such that for every distinct non-disjoint edges $S$ and $S'$, we have $|S\cap S'| = d-1$, then Part 3 of Theorem \ref{1.11} implies that
$$d'_{1,\mathcal{H}}=d'_{2,\mathcal{H}}=(d-1)a_{\mathcal{H}}.$$
In particular, if $G$ is a chordal graph, then $d'_G=a_G$.
Of course note that, in view of definitions of $d'_{1,\mathcal{H}}$ and $a_{\mathcal{H}}$, in a $d$-uniform hypergraph we always have  
$$d'_{1,\mathcal{H}}=(d-1)a_{\mathcal{H}}.$$
\end{remark}

\textbf{Acknowledgments.}
 The authors would like to thank the anonymous reviewer whose comments and remarks improved the presentation of the paper.

\providecommand{\bysame}{\leavevmode\hbox
to3em{\hrulefill}\thinspace}

\end{document}